\documentclass[12pt]{amsart}

\usepackage{amsmath}
\usepackage{amssymb}
\usepackage{amsthm}
\usepackage{url}


\theoremstyle{plain}
\newtheorem{theorem}{Theorem}[section]
\newtheorem{corollary}[theorem]{Corollary}
\newtheorem{lemma}[theorem]{Lemma}

\newtheorem{proposition}[theorem]{Proposition}

\newtheorem{claim}[theorem]{Claim}

\theoremstyle{definition}
\newtheorem{definition}[theorem]{Definition}
\newtheorem{remark}[theorem]{Remark}

\newtheorem{notation}[theorem]{Notation}

\theoremstyle{remark}

\def\cf{\mathop{\mathrm{cf}}}
\def\dom{\mathop{\mathrm{dom}}}
\def\On{\mathop{\mathrm{On}}}
\def\Rk{\mathop{\mathrm{ER}}}

\title{The Hanf number for amalgamation of coloring classes}

\author{Alexei Kolesnikov}
\address{Department of Mathematics, Towson University \\ 8000 York Rd., Towson, MD 21252}
\email{akolesnikov@towson.edu}

\author{Chris Lambie-Hanson}
\address{Einstein Institute of Mathematics, Givat Ram Campus, Hebrew University of Jerusalem \\ Jerusalem 91904 Israel}
\email{clambiehanson@math.huji.ac.il}

\begin{document}

\begin{abstract}
We study amalgamation properties in a family of abstract elementary classes that we call coloring classes. The family includes the examples previously studied in~\cite{BKS}. We establish that the amalgamation property is equivalent to the disjoint amalgamation property in all coloring classes; find the Hanf number for the amalgamation property for coloring classes; and improve the results of~\cite{BKS} by showing, in ZFC, that the (disjoint) amalgamation property for classes $K_\alpha$ studied in that paper must hold up to $\beth_\alpha$ (only a consistency result was previously known).
\end{abstract}

\maketitle

\section*{Introduction}

Amalgamation in an abstract elementary class is a frequently made assumption in 
various structure results; for example, the amalgamation property is a standing assumption in Chapters~8--15 of~\cite{Ba} and is an assumption in~\cite{GrVaDupcat,Lessmannupcat,Sh394}. However, the exact strength of this assumption is still unknown. 
In particular, it is an open problem, posed as Conjecture 9.3 in~\cite{Gro}, whether there is a Hanf number for amalgamation. An expanded version of this question also appears on the list of open problems in~\cite{Ba} (Problem~D.3).

More precisely, suppose that $\mathfrak{K}$ is a family of abstract elementary classes. 
Is there a cardinal $\lambda(\mathfrak{K})$ such that for every $K\in \mathfrak{K}$, the class $K$ has the amalgamation property in some $\mu > \lambda(\mathfrak{K})$ if and only if $K$ has the amalgamation property in all $\mu > \lambda(\mathfrak{K})$? If the answer is yes, then we will call the least such cardinal the \emph{Hanf number of $\mathfrak{K}$ for amalgamation}. A typical example of a family $\mathfrak{K}$ is the collection of all abstract elementary classes $K$ such that the L\"owenheim--Skolem number of $K$ has a fixed size $\kappa$. A general discussion of the term ``the Hanf number for a property $P$'' is in Chapter~4 of~\cite{Ba}.

Partial advances were made in~\cite{BKS}, where, for each infinite cardinal $\kappa$ and each $\alpha<\kappa^+$, a family of examples of abstract elementary classes $K_{\alpha+2}$ in a language $L_\alpha$, $|L_\alpha|=\kappa$, were given such that $K_\alpha$ has the \emph{disjoint} amalgamation property up to $\aleph_\alpha$, but for which the disjoint amalgamation property eventually fails. In fact, none of the classes $K_\alpha$ has arbitrarily large models. It was established that, consistently with $\aleph_\alpha <\beth_\alpha$, the disjoint amalgamation property for $K_{\alpha+2}$ holds up to $\beth_\alpha$. Thus, for the family of all AECs in a language of cardinality $\kappa$, the Hanf number for \emph{disjoint} amalgamation, if it exists, has to be at least $\aleph_{\kappa^+}$ and, consistently with $\aleph_{\kappa^+} <\beth_{\kappa^+}$, has to be at least $\beth_{\kappa^+}$. An error in Proposition~3.6 of~\cite{BKS} was pointed out by Mirna Dzamonja, but it was shown in~\cite{BKSfix} that the main consistency result (Theorem~3.10) still holds. In this paper, we substantially improve the result by obtaining the conclusion of Theorem~3.10 in ZFC. The paper~\cite{BKS} did not address the amalgamation property.

A recent paper~\cite{BKL} addresses a more ambitious problem of classifying possible \emph{amalgamation spectra} for abstract elementary classes. The paper presents a family of abstract elementary classes $K_k$, $k<\omega$, each axiomatized by a complete $L_{\omega_1,\omega}$-sentence and such that $K_k$ has disjoint amalgamation in $\aleph_0,\dots,\aleph_{k-2}$, but has neither amalgamation nor disjoint amalgamation in $\aleph_{k-1}$. Amalgamation  trivially holds in $\aleph_k$, since every model of $K_k$ of that size is maximal (so there are no models of cardinality greater than $\aleph_k$).

In the present paper, we introduce a family of abstract elementary classes that we call \emph{coloring classes}. This family includes the examples of~\cite{BKS}, but does not include the examples of~\cite{BKL}. Our main results include the following, which will be made more precise in the relevant later sections.

\begin{enumerate}
	\item{For an arbitrary coloring class, the disjoint amalgamation property is equivalent to the amalgamation property. In particular, the results of \cite{BKS} also apply to the problem of finding the Hanf number for amalgamation.}
	\item{We improve the results of \cite{BKS} by showing \emph{in ZFC} that, for the classes $K_{\alpha+2}$ studied in \cite{BKS}, the (disjoint) amalgamation property holds up to $\beth_\alpha$.}
	\item{For the collection of all coloring classes in a language of a fixed size $\kappa$, the Hanf number for (disjoint) amalgamation is precisely $\beth_{\kappa^+}$.}
	\item{Given an infinite cardinal $\kappa$, for every limit ordinal $\beta < \kappa^+$ and every $k < \omega$, there is a coloring class in a language of size $\kappa$ for which amalgamation first fails somewhere between $\beth_{\beta + k}$ and $\beth_{\beta + {k+3 \choose 2}}$.}
\end{enumerate}

One of the new tools in the analysis is a rank of finite indiscernible substructures of models in a class.
The rank is implicit in the examples of~\cite{BKS}. The values of the rank of one-element structures control both the existence of arbitrarily large models and the (disjoint) amalgamation property. The method for constructing models of size $\beth_\alpha$ is also new; unlike most of the existing methods, the inductive argument uses the entire family of coloring classes rather than a single class.

We assume that the reader is familiar with the basics of abstract elementary classes (for example, the material in Part 2 of~\cite{Ba}).

\begin{notation}
If $A$ is a set, then by $[A]^n$ we denote the set of all $n$-element subsets of $A$. $[A]^{<\omega}$ denotes the set of all finite subsets of $A$. The symbol $[n]$ denotes the set $\{1,\dots,n\}$; so $[0]$ is the empty set. We let $\omega_+:=\omega\setminus\{0\}$. 

The class sequence of cardinals $\kappa_\alpha$ (these cardinals are equal to $\beth_\alpha$ for $\alpha\ge \omega^2$) is defined at the start of Section~\ref{late_amalg}.

If $M$ is a model, then $|M|$ denotes the universe of the model and $\|M\|$ denotes the cardinality of the universe.

The notion of a set $\mathcal{W}$ of allowed diagrams appears in~Definition~\ref{allowed_diag}; the symbols $\mathcal{W}^n$, $n<\omega$ are explained there. The notation of the form $\mathcal{W}_S$ is explained in~\ref{allowed_diag_pruned}; and $\mathcal{W}/\overline{w}$ is in~\ref{allowed_diag_trimmed}.
\end{notation}

\section{Coloring classes and the existence rank}

\subsection{Coloring classes}

\begin{definition}
Let $L$ be a relational language whose set of relation symbols is $\mathcal{R}=\bigcup_{1\le n<\omega} \mathcal{R}_n$, where, for each $1\le n<\omega$, $\mathcal{R}_n$ is a non-empty set of $n$-ary relation symbols. An $L$-structure $M$ is called an \emph{$L$-coloring structure} if there is a function $c_M:[|M|]^{<\omega}\to \mathcal{R}$ such that $c_M(\{a_1,\dots,a_n\})=P$ if and only if $M\models P(a_1,\dots,a_n)$.

If $M$ is an $L$-coloring structure given by the function $c_M$, then we refer to the function $c_M$ as the \emph{coloring function}.
\end{definition}

\begin{remark}
It is clear that $M$ is a coloring structure if and only if every $P\in \mathcal{R}_n$ is a relation on $n$-element \emph{subsets} of $|M|$ and the realizations of the relations in $\mathcal{R}_n$ partition $[|M|]^n$.
\end{remark}

Everywhere below, we fix a relational language $L$ that has at least one relation of each arity. 

\begin{definition}
Let $N$ be an $L$-coloring structure with the corresponding coloring function $c_N$ and suppose that $M$ is a substructure of $N$. We say that $M$ is \emph{$L$-monochromatic} if for each $n$, for all $A,B\in [|M|]^n$ we have $c_N(A)=c_N(B)$.

If $M$ is an $n$-element $L$-monochromatic substructure of $N$, the \emph{diagram of $M$} is the function $d_M:[n]\to \mathcal{R}$ given by $d_M(k):=c_N(A)$ for some (any) $k$-element subset $A$ of $M$. If $M$ is an infinite $L$-monochromatic substructure, then the diagram of $M$ is the function $d_M:\omega_+\to \mathcal{R}$ defined similarly: $d_M(k):=c_N(A)$ for some (any) $k$-element subset $A$ of $M$.
\end{definition}

In the language of model theory, an $L$-monochromatic substructure of $N$ is a subset of $N$ indiscernible with respect to quantifier-free formulas in $L$ and the diagram $d_M$ codes the quantifier-free type of the indiscernible substructure. For the purposes of this paper, we find it convenient to work with functions, and hence we will be using the terminology of colorings.

\begin{definition}\label{allowed_diag}
A \emph{set of allowed diagrams} is a non-empty subset $\mathcal{W}$ of the set of functions $\{w:[n]\to \mathcal{R}\mid n<\omega, w(k)\in \mathcal{R}_k \textrm{ for }1\le k\le n\}$ such that for all $w\in \mathcal{W}$, $w:[n]\to \mathcal{R}$, if $m\le n$, then $w\restriction [m]\in \mathcal{W}$. 

We use the symbol $\mathcal{W}^n$ to denote the set of functions in $\mathcal{W}$ with domain $[n]$. Note that $\mathcal{W}^0=\{ \emptyset \}$.

Given a set $\mathcal{W}$ of allowed diagrams, the class $K(\mathcal{W})$ is the class
of all $L$-coloring structures $N$ such that $d_M\in \mathcal{W}$ for every finite monochromatic substructure $M\subset N$. 
\end{definition}

\begin{remark}
It is easy to check that, for a set $\mathcal{W}$ of allowed diagrams in a relational language $L$, the pair $(K(\mathcal{W}),\subset)$, where $\subset$ is the substructure relation, forms an abstract elementary class with countable L\"owenheim--Skolem number. Indeed, it follows from the definition of the substructure relation that all the axioms of abstract elementary classes hold, except for the union of chains axiom and the existence of a L\"owenheim--Skolem number. The latter two axioms follow since the membership of an $L$-structure in $K$ is determined by the properties of its finite substructures.
\end{remark}

\begin{definition}
If $c:[|M|]^{<\omega} \to \mathcal{R}$ is a coloring function such that the $L$-structure $M$ given by $c$ is in the class $K(\mathcal{W})$, then we call $c$ a \emph{well-coloring with respect to $\mathcal{W}$}, or simply a $\mathcal{W}$-coloring.

An abstract elementary class $\mathcal{K}$ in a relational language $L$ is a \emph{coloring class} if $\mathcal{K}=K(\mathcal{W})$ for some set of allowed diagrams $\mathcal{W}$.
\end{definition}

Note that, if $\mathcal{W}$ is a set of allowed diagrams, then $(\mathcal{W}, \subset)$ is a tree, the root of the tree is $\emptyset$, and, for $n < \omega$, $\mathcal{W}^n$ is the $n$-th level of the tree. The interesting cases will be those in which this tree is well-founded, i.e. in which there is no $d : \omega_+ \to \mathcal{R}$ such that, for all $n < \omega$, $d \restriction [n] \in \mathcal{W}$. Such a function $d$ would correspond to an infinite monochromatic structure in $K(\mathcal{W})$. With this in mind, we introduce a rank function on the elements of $\mathcal{W}$. This is simply the usual rank function on well-founded trees, but we provide an explicit definition for completeness.

\begin{definition}
Fix a set $\mathcal{W}$ of allowed diagrams in the language $L$. Define an \emph{existence rank $\Rk(w;\mathcal{W})$ with respect to $\mathcal{W}$} on the elements of $w\in \mathcal{W}$ by induction as follows. If $n<\omega$ and $w:[n]\to \mathcal{R}$ is an element of $\mathcal{W}^n$, then:
\begin{enumerate}
\item
$\Rk(w;\mathcal{W})\ge 0$;
\item
if $\alpha$ is a limit ordinal, then $\Rk(w;\mathcal{W})\ge \alpha$ provided $\Rk(w;\mathcal{W})\ge \beta$ for all $\beta<\alpha$;
\item
if $\alpha=\beta+1$, then $\Rk(w;\mathcal{W})\ge \alpha$ if there is $w'\in \mathcal{W}^{n+1}$ such that $w'\supset w$ and $\Rk(w';\mathcal{W})\ge \beta$.
\end{enumerate}
If $\Rk(w;\mathcal{W})\ge \alpha$ and $\Rk(w;\mathcal{W})\not\ge \alpha+1$, then we say $\Rk(w;\mathcal{W})=\alpha$. If $\Rk(w;\mathcal{W})\ge \alpha$ for all ordinals $\alpha$, then we say $\Rk(w;\mathcal{W})=\infty$.
\end{definition}

\begin{proposition}
\label{infinite_rank}
Let $\mathcal{W}$ be a set of allowed diagrams. The following are equivalent:
\begin{enumerate}
\item
$\Rk(w;\mathcal{W})=\infty$;
\item
$\Rk(w;\mathcal{W})\ge |L|^+$;
\item
There is an infinite monochromatic structure $M\in K(\mathcal{W})$ such that $d_M\supset w$.
\end{enumerate}
\end{proposition}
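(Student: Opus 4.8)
The plan is to prove the cycle of implications $(1)\Rightarrow(2)\Rightarrow(3)\Rightarrow(1)$. The first implication is immediate from the definition of the rank: if $\Rk(w;\mathcal{W})\ge\alpha$ for \emph{every} ordinal $\alpha$, then in particular $\Rk(w;\mathcal{W})\ge|L|^+$. The remaining two implications both rely on the elementary dictionary between infinite monochromatic structures and branches of the tree $(\mathcal{W},\subset)$: if $M\in K(\mathcal{W})$ is infinite and monochromatic with $d_M\supseteq w$, then each $n$-element substructure of $M$ is monochromatic with diagram $d_M\restriction[n]$, so membership in $K(\mathcal{W})$ forces $d_M\restriction[n]\in\mathcal{W}$ for every $n$, i.e. $d_M$ is an infinite branch of $\mathcal{W}$ through $w$; conversely, any $d:\omega_+\to\mathcal{R}$ with $d\restriction[n]\in\mathcal{W}$ for all $n$ is realized by the monochromatic structure on a countably infinite universe whose coloring function sends a finite set $A$ to $d(|A|)$, and a direct check shows this structure lies in $K(\mathcal{W})$.

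For $(3)\Rightarrow(1)$, let $d$ be the infinite branch through $w$ provided by the dictionary. I would show by induction on $\alpha$ that any $v\in\mathcal{W}$ lying on some infinite branch satisfies $\Rk(v;\mathcal{W})\ge\alpha$. The cases $\alpha=0$ and $\alpha$ a limit ordinal are exactly clauses (1) and (2) of the definition of $\Rk$. For $\alpha=\beta+1$, if $v\in\mathcal{W}^n$ lies on an infinite branch, that branch continues through some $v'\in\mathcal{W}^{n+1}$ with $v'\supset v$, and $v'$ again lies on an infinite branch, so $\Rk(v';\mathcal{W})\ge\beta$ by the inductive hypothesis and hence $\Rk(v;\mathcal{W})\ge\beta+1$ by clause (3). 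Applying this with $v=w$ gives $\Rk(w;\mathcal{W})=\infty$.

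The substantive implication is $(2)\Rightarrow(3)$, which I would argue by contraposition: supposing there is no infinite monochromatic $M\in K(\mathcal{W})$ with $d_M\supseteq w$, I show $\Rk(w;\mathcal{W})<|L|^+$. By the dictionary, the hypothesis says precisely that the subtree $T_w:=\{v\in\mathcal{W}:v\supseteq w\}$ of extensions of $w$ has no infinite branch, i.e. is well-founded. Since clause (3) of the definition of $\Rk$ refers only to proper extensions, $\Rk(w;\mathcal{W})$ equals the usual ordinal rank of $w$ in the well-founded tree $T_w$. Now $|L|\ge\aleph_0$ because $L$ has a relation of each arity, each level satisfies $|\mathcal{W}^n|\le|\mathcal{R}|^n\le|L|$, and there are only countably many levels, so $|T_w|\le|L|$. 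It then remains to invoke the classical bound on the rank of a well-founded tree: arguing by induction along $T_w$ (which is legitimate precisely because $T_w$ has no infinite branch), the rank of a node is the supremum of $\{\mathrm{rk}(v')+1:v'\text{ an immediate successor}\}$, a set of at most $|L|$ ordinals, each below $|L|^+$ by the inductive hypothesis; as $|L|^+$ is regular, this supremum is again below $|L|^+$. Hence $\Rk(w;\mathcal{W})<|L|^+$, contradicting (2).

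The main thing to be careful about is really bookkeeping rather than any genuine obstacle: one must check that an arbitrary branch of $\mathcal{W}$ through $w$ does produce a structure in $K(\mathcal{W})$, and that the inductively defined $\Rk$ agrees with the ordinal rank of the well-founded tree $T_w$, so that the standard inequality $\mathrm{rk}(T)<|T|^+$ for well-founded trees can be applied.
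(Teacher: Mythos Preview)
Your proof is correct and follows the same overall scheme as the paper: the cycle $(1)\Rightarrow(2)\Rightarrow(3)\Rightarrow(1)$, with $(1)\Rightarrow(2)$ trivial and $(3)\Rightarrow(1)$ by induction on $\alpha$ using that any finite segment of an infinite branch extends within the branch.

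The only difference is in the presentation of $(2)\Rightarrow(3)$. The paper argues directly: given $\Rk(w;\mathcal{W})\ge|L|^+$, each of the at most $|L|$ immediate extensions $u$ of $w$ witnesses $\Rk(u;\mathcal{W})\ge\beta$ for various $\beta<|L|^+$, so by pigeonhole one fixed $w^*$ works for cofinally many $\beta$ and hence satisfies $\Rk(w^*;\mathcal{W})\ge|L|^+$; iterating produces the branch. You instead take the contrapositive and bound the rank of a well-founded tree of size at most $|L|$ by well-founded induction, using regularity of $|L|^+$. These are dual formulations of the same combinatorial fact (at most $|L|$ successors per node, $|L|^+$ regular), so neither buys anything the other does not; your version packages the step as an appeal to the standard rank bound for well-founded trees, while the paper's version makes the branch construction explicit.
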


\begin{proof}
$(1)\Rightarrow (2)$ is immediate. 

$(2)\Rightarrow (3)$ will follow once we prove that for every $1\le n < \omega$ and every $w\in \mathcal{W}^n$ such that $\Rk(w_1;\mathcal{W})\ge |L|^+$, there exists a proper extension $w^*\in \mathcal{W}^{n+1}$ of $w$ such that $\Rk(w^*;\mathcal{W})\ge |L|^+$. Indeed, given $w$ as above, for every $\beta<|L|^+$, there is $u_\beta\in \mathcal{W}^{n+1}$ such that $\Rk(u_\beta;\mathcal{W})\ge \beta$. Since there are at most $|L|$ distinct such extensions, there is $w^*\in \mathcal{W}$ such that $w^*=u_\beta$ for unboundedly many $\beta<|L|^+$. But then $\Rk(w^*;\mathcal{W})\ge |L|^+$.

$(3)\Rightarrow (1)$ Suppose $N$ is an infinite monochromatic structure in $K(\mathcal{W})$. By induction on $\alpha$, one can show that $\Rk(d_M;\mathcal{W})\ge \alpha$ for all finite monochromatic $M\subset N$. This is easy, as every finite substructure of $N$ extends to a larger monochromatic well-coloring.
\end{proof}

It follows that if $\mathcal{W}$ contains an infinite-rank element, then the coloring class $K(\mathcal{W})$ contains a model of arbitrarily large size.

\subsection{Amalgamation is equivalent to disjoint amalgamation in coloring classes}

\begin{definition}
Fix a set of allowed diagrams $\mathcal{W}$ and a cardinal $\lambda$. 
A pair of $\mathcal{W}$-colorings $\{c_1,c_2\}$ is a \emph{special $(\lambda,2)$-system} if there is a set $X$ of size $\lambda$ and elements $a_1,a_2\notin X$ such that
\begin{enumerate}
\item
the domain of $c_i$ is $[X\cup \{a_i\}]^{<\omega}$ for $i=1,2$;
\item
$c_1\restriction [X]^{<\omega}= c_2\restriction [X]^{<\omega}$.
\end{enumerate}
\end{definition}

\begin{remark}
In the language of model theory, a special $(\lambda,2)$-system $\{c_1,c_2\}$ carries the following information. Each of the functions $c_i$, $i=1,2$, defines an $L$-structure $M_i\in K(\mathcal{W})$ with the universe $X\cup\{a_i\}$. The structures $M_1$ and $M_2$ contain a common substructure $M$ with the universe $X$. The coloring function of $M$ is the common restriction to $[X]^{<\omega}$ of the functions $c_1$ and $c_2$.

An inductive argument shows that a class $K=K(\mathcal{W})$ has disjoint amalgamation for models of size $\lambda$ if and only if for every special $(\lambda,2)$-system of colorings $\{c_1,c_2\}$ there is a $\mathcal{W}$-coloring $c\supset c_1\cup c_2$.
\end{remark}

We now show that, for sufficiently rich coloring classes, the amalgamation property is equivalent to the disjoint amalgamation property. 

\begin{proposition}
\label{ap_equiv_dap}
Let $L$ be a relational language such that $|\mathcal{R}_k|>1$ for infinitely many $k\ge 2$. Let $\mathcal{W}$ be a set of allowed diagrams, and suppose that, for every $w\in \mathcal{W}^1$, there is $n<\omega$ and $w_1,w_2\in \mathcal{W}^n$ such that $w\subset w_1,w_2$ and $w_1(n)\ne w_2(n)$. 

The class $K=K(\mathcal{W})$ has the amalgamation property for models of size $\lambda$ if and only if it has the disjoint amalgamation property for models of size $\lambda$.
\end{proposition}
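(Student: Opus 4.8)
The plan: the direction ``disjoint amalgamation $\Rightarrow$ amalgamation'' is trivial. For the converse, assume $K$ has amalgamation in $\lambda$; by the reduction recorded in the remark above it suffices to extend an arbitrary special $(\lambda,2)$-system $\{c_1,c_2\}$ to a $\mathcal W$-coloring. Write $M$, $M_1=M\cup\{a_1\}$, $M_2=M\cup\{a_2\}$ for the associated structures (all of size $\lambda$) and $X=|M|$. I would first amalgamate the triple $(M,M_1,M_2)$, obtaining $N\in K$ with embeddings $f_i\colon M_i\to N$ agreeing on $M$. If $f_1(a_1)\ne f_2(a_2)$, then pulling the coloring of $N$ back to $X\cup\{a_1,a_2\}$ along the $f_i$ gives a $\mathcal W$-coloring extending $c_1\cup c_2$, and we are done. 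Otherwise $f_1(a_1)=f_2(a_2)$; comparing the colors of $f_1[A]\cup\{f_1(a_1)\}$ over finite $A\subseteq X$ shows $c_1(A\cup\{a_1\})=c_2(A\cup\{a_2\})$ for all such $A$, i.e.\ $a_1$ and $a_2$ realize a common type $p$ over $M$, equivalently $M_1\cong M_2$ via the map fixing $X$ and swapping $a_1,a_2$. This last case is the only one with content.

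Suppose first that $M_1$ is monochromatic. Then it is an infinite monochromatic structure of $K$, so $\Rk(d_{M_1}\restriction 1;\mathcal W)=\infty$ by Proposition~\ref{infinite_rank} and every finite restriction of $d:=d_{M_1}$ lies in $\mathcal W$. Coloring $X\cup\{a_1,a_2\}$ by $S\mapsto d(|S|)$ then makes every finite substructure monochromatic with diagram $d\restriction[|S|]\in\mathcal W$, extends $c_1$ and $c_2$ (which are $d$-monochromatic), and is the desired disjoint amalgam.

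Now suppose $M_1$ is not monochromatic, so some finite $A_0\subseteq X$ has $A_0\cup\{a_1\}$ not monochromatic. Since $X$ is infinite and $|\mathcal R_k|>1$ for infinitely many $k$, I would enlarge $A_0$ to a finite $A\subseteq X$ with $|\mathcal R_{|A|+1}|>1$; then $A\cup\{a_1\}$ (containing the non-monochromatic $A_0\cup\{a_1\}$), and hence also the isomorphic-over-$A$ set $A\cup\{a_2\}$, fails to be monochromatic, so \emph{no} superset of $A\cup\{a_2\}$ can be monochromatic under any coloring whatsoever. Let $c_2'$ agree with $c_2$ off the ``cone'' $\{S:A\subseteq S,\ a_2\in S\}$, send $A\cup\{a_2\}$ to some $\rho\in\mathcal R_{|A|+1}\setminus\{c_1(A\cup\{a_1\})\}$, and send the larger cone-sets to arbitrary colors of the right arity. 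Each set on which $c_2'$ differs from $c_2$ contains the non-monochromatic set $A\cup\{a_2\}$ and so is non-monochromatic for $c_2'$; no other set's monochromaticity status changes; hence $c_2'$ defines a model $M_2'\in K$ of size $\lambda$ in which $a_2$ realizes a type $\ne p$ over $M$. Amalgamating $(M,M_1,M_2')$ now cannot identify the differently-typed $a_1,a_2$, so (as in the first paragraph) it yields a $\mathcal W$-coloring $c^{**}$ on $X\cup\{a_1,a_2\}$ extending $c_1\cup c_2'$. Finally I would reset $c^{**}(S):=c_2(S)$ for every $S\subseteq X\cup\{a_2\}$ in the cone of $A$; once more every altered set contains the (still non-monochromatic) set $A\cup\{a_2\}$, so no new monochromatic substructure appears and $c^{**}$ stays a $\mathcal W$-coloring, and it now restricts to $c_1$ on $[X\cup\{a_1\}]^{<\omega}$ and to $c_2$ on $[X\cup\{a_2\}]^{<\omega}$, i.e.\ extends $c_1\cup c_2$. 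That is the disjoint amalgam.

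The main obstacle is the last two bookkeeping moves: verifying that re-coloring the cone above $A$ neither manufactures a monochromatic substructure with a diagram outside $\mathcal W$ nor otherwise pushes the structure out of $K$. The point — and the reason the richness hypothesis is spent to locate a non-monochromatic $A\cup\{a_1\}$ \emph{at an arity with a spare color} — is exactly that once $A\cup\{a_2\}$ is non-monochromatic, every superset of it is too, so all these re-colorings happen inside the non-monochromatic part of the structure and are invisible to the set of monochromatic substructures and their diagrams. (The branching hypothesis serves only to exclude degenerate $\mathcal W$, such as a $\subseteq$-maximal $w\in\mathcal W^1$, for which two elements could not realize the same $1$-type at all; one can also fold this into the first step.)
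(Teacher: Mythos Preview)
Your argument is correct and follows the same high-level strategy as the paper (when the types of $a_1$ and $a_2$ over $M$ coincide, perturb one coloring on a finite set that is already non-monochromatic so as to separate the types, amalgamate, then revert). The case analysis, however, is genuinely different. The paper splits on whether every diagram in $\mathcal{W}$ above $d_{\{a_1\}}$ is realized in $M_1$: if some $w$ is unrealized it builds the disjoint amalgam directly using $w$ on sets containing $\{a_1,a_2\}$; if all are realized it invokes the branching hypothesis to produce two monochromatic sets $\{a_1\}\cup B_1$, $\{a_1\}\cup B_2$ with distinct top colors, so that any common superset $C$ is automatically non-monochromatic, and then recolors only the single set $C$. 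You instead split on whether $M_1$ is monochromatic: if it is, you use its infinite diagram to color everything; if not, you exploit an existing non-monochromatic finite subset of $M_1$ directly to locate a safe set $A\cup\{a_2\}$ (enlarged so that a spare color is available) and recolor the cone above it. Your route is more elementary in that the branching hypothesis on $\mathcal{W}$ is never used---only the richness of $L$---so your argument actually proves the proposition with that hypothesis deleted. Your closing parenthetical about what the branching hypothesis ``serves to exclude'' is muddled (two points $a_1\in M_1$, $a_2\in M_2$ of the same $1$-type can perfectly well coexist over $M$ even when that $1$-type is $\subseteq$-maximal in $\mathcal{W}$), but this does not affect the proof itself. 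Minor cosmetic point: recoloring the entire cone is more than necessary---changing only the single value $c_2'(A\cup\{a_2\})$ already suffices, as in the paper.
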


\begin{proof}
It is clear that the disjoint amalgamation implies amalgamation (in any class); it suffices to establish that the converse holds in a coloring class.

Let $K=K(\mathcal{W})$ be a coloring class that has the amalgamation property in $\lambda$ and suppose that $L$ and $\mathcal{W}$ satisfy the assumptions of the proposition.
To establish the disjoint amalgamation property in $\lambda$, it is enough to show that for a set $X$ of size $\lambda$ any two $\mathcal{W}$-colorings $c_i:[X\cup \{a_i\}]^{<\omega} \to \mathcal{R}$, $i=1,2$, that agree on $[X]^{<\omega}$ can be extended to a $\mathcal{W}$-coloring of $[X\cup \{a_1,a_2\}]^{<\omega}$. Let $M_0\in K$ be determined by the common restriction of $c_1$, $c_2$ to $[X]^{<\omega}$, and let $M_1$ and $M_2$ be the structures determined by $c_1$ and $c_2$. We split the argument into three cases. 

\textbf{Case 1:} $c_1(\{a_1\}\cup C)\ne c_2(\{a_2\}\cup C)$ for some $C\in [X]^{<\omega}$.

If $M^*$ is the amalgam of $M_1$ and $M_2$ over $M_0$ and $f_i:M_i\to M^*$ are the corresponding embeddings, then the substructure $f_1(M_1)\cup f_2(M_2)$ of $M^*$ is the disjoint amalgam of $M_1$ and $M_2$ over $M_0$, since it cannot be the case that $f_1(a_1) = f_2(a_2)$. 

\textbf{Case 2:} $c_1(\{a_1\}\cup C)= c_2(\{a_2\}\cup C)$ for all $C\in [X]^{<\omega}$ and there is $k<\omega$ and $w\in \mathcal{W}^k$ such that $w\supset d_{\{a_1\}}$ and $w\ne d_B$ for every monochromatic $B\in [X\cup a_1]^k$. 

Note that $k\ge 2$. We can then define a well-coloring $c$ of $X \cup \{a_1,a_2\}$ amalgamating $c_1$ and $c_2$ as follows. Note that, to define $c$, we only have to specify its values on sets of the form $C \cup \{a_1, a_2\}$, where $C \in [X]^{<\omega}$. For all $0\le i \le k-2$ and all $C\in [X]^i$, let $c(C \cup \{a_1,a_2\}) = w(i+2)$. Note that, with this definition, if $B\in [X]^{\ell}$ for some $\ell > k-2$, then it is impossible for $\{a_1,a_2\}\cup B$ to be monochromatic with respect to $c$. Thus, $c$ can be extended to larger sets arbitrarily. It is easily verified that the $L$-structure given by $c$ is in $K(\mathcal{W})$.

\textbf{Case 3:} $c_1(\{a_1\}\cup C)= c_2(\{a_2\}\cup C)$ for all $C\in [X]^{<\omega}$ and, for every $k<\omega$ and every $w\in \mathcal{W}^k$ such that $w\supset d_{\{a_1\}}$, there is a monochromatic $B\in [X\cup a_1]^k$ with $d_B=w$. 

We modify the coloring $c_1$ to get a different coloring $c'_1$ of $X\cup\{a_1\}$ 
so that the pair $\{c'_1,c_2\}$ satisfies the assumptions of Case~1. The coloring $c'_1$ will coincide with $c_1$ for all but one subset of $X\cup\{a_1\}$. Fix $n < \omega$ and distinct $w_1, w_2 \in \mathcal{W}^n$ such that $w_1, w_2 \supset d_{\{a_1\}}$ and $w_1(n) \not= w_2(n)$. Find $B_1, B_2 \in [X]^{n-1}$ such that, for $i=1,2$, $d_{(\{a_1\} \cup B_i)}=w_i$. By the assumptions of the Proposition, there is $k>2n-1$ such that $|\mathcal{R}_k|>1$. Let $C\supseteq \{a_1\}\cup B_1\cup B_2$ be a $k$-element subset. Define $c'_1(C)$ to be a distinct color from $c_1(C)$ and let $c'_1(D):=c_1(D)$ for all finite $D\ne C$. The coloring $c'_1$ is easily seen to be a $\mathcal{W}$-coloring.
Indeed, no monochromatic subset can contain $C$. Now amalgamation in $\lambda$ implies, by Case~1, that $c'_1$ and $c_2$ can be extended to a coloring $c'$. The coloring $c'$ can easily be turned into a $\mathcal{W}$-coloring extending $c_1$ and $c_2$ simply by appropriately changing the value on $C$. 
\end{proof}


\section{Amalgamation in one large power implies amalgamation in all powers}

The main goal of this section is to prove the following result.

\begin{theorem}
\label{Hanf_upside}
Let $K=K(\mathcal{W})$ be a coloring class, let $\lambda\ge \beth_{|L|^+}$, and suppose that $K_\lambda$ is non-empty. Then $K$ has no maximal models and, if $K_\lambda$ has the (disjoint) amalgamation property, then $K_\mu$ has the (disjoint) amalgamation property for all $\mu\ge \beth_{|L|^+}$.
\end{theorem}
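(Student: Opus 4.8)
The plan is to reduce the statement to a cardinal‑free combinatorial property of the tree $\mathcal{W}$, so that the ``all or nothing'' behaviour above $\beth_{|L|^+}$ becomes automatic. The starting observation is that the hypothesis $K_\lambda\neq\emptyset$ with $\lambda\ge\beth_{|L|^+}$ forces $\mathcal{W}$ to contain an \emph{infinite branch}, i.e.\ a function $d^*\colon\omega_+\to\mathcal{R}$ with $d^*\restriction[n]\in\mathcal{W}$ for every $n$: by a standard application of the Erd\H{o}s--Rado theorem a coloring structure of cardinality $\ge\beth_{|L|^+}$ in a language of size $|L|$ contains an infinite monochromatic substructure, and its diagram is such a $d^*$. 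This branch supplies the basic extension device used everywhere: given any $N\in K$ and a new point $a$, declare $c(\{a\}\cup C):=d^*(|C|+1)$ for $C\in[|N|]^{<\omega}$; then, checking level by level, any monochromatic set containing $a$ is forced to have diagram an initial segment of $d^*$, so the enlarged structure lies in $K(\mathcal{W})$. In particular $K$ has no maximal models, and, iterating along a chain (using that $K(\mathcal W)$ is closed under unions of chains), every $w\in\mathcal{W}^1$ realized in \emph{some} member of $K$ is realized in a model of size $\lambda$. Hence the disjoint amalgamation property in $K_\lambda$ ``sees'' every realizable finite monochromatic configuration, and it suffices to prove that, for $\mu\ge\beth_{|L|^+}$, the disjoint amalgamation property in $K_\mu$ is equivalent to a property $\Phi(\mathcal{W})$ of $\mathcal{W}$ alone; the same argument is then run for amalgamation.

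The property $\Phi$ says, informally, that every realizable finite monochromatic configuration can be ``amalgamated with a copy of itself'' inside $\mathcal{W}$, and I expect it to be stated in terms of existence ranks of the pruned and trimmed sets of allowed diagrams $\mathcal{W}_S$ and $\mathcal{W}/\overline{w}$ introduced later in the paper. The first implication, $\Phi(\mathcal{W})\Rightarrow$ disjoint amalgamation in every cardinal, is the soft direction. Given a special $(\mu,2)$-system $\{c_1,c_2\}$ with base $M_0=(X,c_0)$ and $M_i=(X\cup\{a_i\},c_i)$: if $c_1(\{a_1\})\neq c_2(\{a_2\})$ then no set containing both $a_1$ and $a_2$ is monochromatic, so those sets may be coloured arbitrarily; if instead $a_1$ and $a_2$ have a common one-element diagram $w\in\mathcal{W}^1$ (realized in $M_1$), then $\Phi(\mathcal{W})$ produces an infinite branch through the appropriate derived tree above $w$, and colouring each $\{a_1,a_2\}\cup C$ by the corresponding shifted value forces any monochromatic set containing both $a_1,a_2$ to have diagram a shifted initial segment of that branch, hence in $\mathcal{W}$. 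This yields a disjoint amalgam, a fortiori an amalgam.

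The second implication — disjoint amalgamation in a single $K_\lambda$ with $\lambda\ge\beth_{|L|^+}$ implies $\Phi(\mathcal{W})$ — is where the work lies, and is proved by contraposition. If $\Phi(\mathcal{W})$ fails it is witnessed by a realizable configuration $\overline{w}$ for which a suitable derived set of allowed diagrams (a pruning of $\mathcal{W}/\overline{w}$) is well-founded. One then constructs a special $(\lambda,2)$-system whose base $M_0$, padded up to size $\lambda$ with $d^*$, is sufficiently \emph{saturated} with respect to $\overline{w}$: among points of the relevant colour it realizes a rich enough family of the relevant finite monochromatic patterns that any candidate amalgamating colouring $q$ of the sets $\{a_1,a_2\}\cup C$ is forced — by the patterns already present in $M_1$ and $M_2$ together with the demand that all induced diagrams lie in $\mathcal{W}$ — to ``follow'' the well-founded tree; a transfinite induction on the relevant existence rank shows $q$ must eventually make some $\{a_1,a_2\}\cup C$ monochromatic with a diagram exactly one level longer than a leaf of $\mathcal{W}$, so no amalgam of $M_1,M_2$ over $M_0$ exists. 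For the amalgamation (rather than disjoint amalgamation) version one additionally makes the one-element diagrams of $a_1$ and $a_2$ differ at a single coordinate outside the saturated part, so that identifying $a_1$ with $a_2$ is impossible while the obstruction is untouched; by Case~1 of Proposition~\ref{ap_equiv_dap} this rules out any amalgam. Combining the two implications with the branch-padding lemma gives the theorem, for both amalgamation and disjoint amalgamation.

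The main obstacle is this second implication: pinning down the correct property $\Phi$ (equivalently, the correct derived tree and its rank), and above all carrying out the saturated-base construction — deciding precisely which finite monochromatic configurations to realize, and with what multiplicity, so that well-foundedness genuinely traps \emph{every} candidate colouring rather than letting it ``scatter'' the two new points apart, and then organizing the resulting constraints into one transfinite induction. This is also where the threshold $\beth_{|L|^+}$ enters rather than something smaller: making the required patterns redundant enough forces the base to be taken very large, and the sharpness of the relevant Erd\H{o}s--Rado bounds makes $\beth_{|L|^+}$ the value beyond which all well-founded obstructions have come into play.
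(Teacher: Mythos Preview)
Your overall architecture is the paper's: find an infinite branch $d^*$ via Erd\H{o}s--Rado, use it to pad and to kill maximal models, and reduce (disjoint) amalgamation above $\beth_{|L|^+}$ to a cardinal-free property $\Phi(\mathcal{W})$. The soft direction you sketch is exactly what the paper does in the last paragraph of its proof. Where you diverge is in the identification of $\Phi$ and, crucially, in the hard direction.

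The correct $\Phi$ is simply ``$\Rk(w;\mathcal{W})=\infty$ for every $w\in\mathcal{W}$'' (equivalently, every $w\in\mathcal{W}^1$ lies on an infinite branch). No derived trees $\mathcal{W}/\overline{w}$ or delicate amalgamation-of-copies condition is needed. With this $\Phi$ the hard direction becomes almost trivial, and this is where your plan over-engineers the problem. Suppose $\Rk(w;\mathcal{W})<\infty$ for some $w$. By the rank--size bound (Lemma~\ref{rank-size}), the pruned class $K(\mathcal{W}_{\{w\}})$ has no model of size $\beth_{|L|^+}$, so it contains a maximal model $M$ at some $\kappa<\beth_{|L|^+}$; with $M_1\subsetneq M_2=M_3=M$ this is a DAP failure in $K(\mathcal{W}_{\{w\}})$ of size $\kappa$. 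Now pad $M_1,M_2,M_3$ to size $\lambda$ with your branch $d^*$ (this is Corollary~\ref{large_model_corollary}(3)), obtaining $N_1\subset N_2,N_3$ in $K(\mathcal{W})$. Any disjoint amalgam of $N_2,N_3$ over $N_1$ in $K(\mathcal{W})$ restricts to a disjoint amalgam of $M_2,M_3$ over $M_1$, contradiction. That is the entire content of Lemma~\ref{large_dap_rank}.

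So your ``main obstacle'' --- the saturated-base construction and the transfinite induction forcing a candidate colouring down a well-founded tree --- never needs to be confronted. The failure of DAP is not manufactured at $\lambda$; it already exists at some small $\kappa$ inside the pruned class and is merely transported upward by padding. Your proposal is not wrong in spirit, but it trades a two-line argument for a construction you yourself flag as the hard part and do not carry out. For the plain amalgamation property, the paper does not run a separate argument either: it invokes Proposition~\ref{ap_equiv_dap} once to reduce AP to DAP.
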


In particular, the above theorem shows that the Hanf number (for the existence of models) is $\beth_{|L|^+}$ for the family of coloring classes in the language $L$. Note that, for general abstract elementary classes, the Hanf number is $\beth_{(2^{|L|})^+}$.

We analyse a coloring class with the set of allowed diagrams $\mathcal{W}$ by examining coloring classes with smaller, ``pruned,'' sets of allowed diagrams.

\begin{notation}\label{allowed_diag_pruned}
If $S\subseteq \mathcal{W}$, let $\mathcal{W}_S$ denote the set $\{ w \in \mathcal{W} \mid$ for some $u \in S$, $w\subseteq u$ or $u\subseteq w \}$.
\end{notation}

\begin{remark}
	In the language of trees, forming $\mathcal{W}_S$ from $\mathcal{W}$ amounts to pruning the tree associated with $\mathcal{W}$ to leave only those nodes that are comparable with elements of $S$.
\end{remark}

\begin{proposition}
\label{pruned_rank}
Let $\mathcal{W}$ be a set of allowed diagrams in $L$.
\begin{enumerate}
\item
If $u,w\in \mathcal{W}$ and $u\subseteq w$, then $\Rk(w;\mathcal{W}_{\{u\}}) = \Rk(w;\mathcal{W})$.
\item
Suppose $\Rk(\emptyset;\mathcal{W}) = \alpha + k$, where $\alpha$ is a limit ordinal and $k < \omega$. Then there is $\overline{w}\in \mathcal{W}^k$ such that $\Rk(\overline{w};\mathcal{W}) = \alpha$. Moreover, there are disjoint sets $\{S_i \subset (\mathcal{W}_{\{\overline{w}\}})^{k+1}\mid i < \cf(\alpha) \}$ such that $\Rk(\emptyset;\mathcal{W}_{S_i}) = \alpha+k$ for every $i < \cf(\alpha)$.
\end{enumerate}
\end{proposition}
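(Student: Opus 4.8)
The plan is to treat the two parts separately, since part (1) is a routine induction and part (2) is where the real work lies. For part (1), fix $u \subseteq w$ in $\mathcal{W}$. Since $\mathcal{W}_{\{u\}} \subseteq \mathcal{W}$, and since the rank of a node depends only on the subtree of extensions above it, it suffices to observe that every extension $w' \supseteq w$ in $\mathcal{W}$ lies in $\mathcal{W}_{\{u\}}$ as well (because $u \subseteq w \subseteq w'$ means $w'$ is comparable with $u$). Then a straightforward induction on $\alpha$ shows $\Rk(w; \mathcal{W}_{\{u\}}) \ge \alpha \iff \Rk(w;\mathcal{W}) \ge \alpha$: the successor step uses that the witnessing extensions $w'$ are the same in both trees, and the limit step is immediate. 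Hence the two ranks coincide.

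For part (2), I would first extract the node $\overline{w}$. Using the definition of the rank together with a downward induction: since $\Rk(\emptyset; \mathcal{W}) = \alpha + k$ with $\alpha$ limit, one descends $k$ levels, at each step choosing an extension whose rank drops by exactly one, arriving at some $\overline{w} \in \mathcal{W}^k$ with $\Rk(\overline{w}; \mathcal{W}) = \alpha$. (One must check that at each step an extension of rank exactly "one less" exists; this follows because if every extension had rank $\ge \beta$ then the current node would have rank $\ge \beta + 1$, and the rank values among the finitely-or-not-many extensions realize a supremum that, for a successor current rank, is attained.) By part (1), $\Rk(\overline{w}; \mathcal{W}_{\{\overline{w}\}}) = \alpha$ as well, so we may now work entirely inside $\mathcal{W}' := \mathcal{W}_{\{\overline{w}\}}$, whose nodes at level $k$ and above form the subtree above $\overline{w}$, and $\Rk(\emptyset; \mathcal{W}') = \alpha + k$.

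The heart of the argument is the "moreover" clause: splitting the level-$(k+1)$ extensions of $\overline{w}$ into $\cf(\alpha)$ many disjoint pieces $S_i$, each still carrying full rank $\alpha + k$ at the root. Since $\Rk(\overline{w}; \mathcal{W}') = \alpha$ and $\alpha$ is a limit, the set $E$ of immediate successors $w' \in (\mathcal{W}')^{k+1}$ of $\overline{w}$ has $\sup_{w' \in E} \bigl(\Rk(w';\mathcal{W}') + 1\bigr) = \alpha$, so the set of rank-values $\{\Rk(w'; \mathcal{W}') : w' \in E\}$ is cofinal in $\alpha$; in particular $|E| \ge \cf(\alpha)$. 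Fix a strictly increasing cofinal sequence $\langle \beta_i : i < \cf(\alpha)\rangle$ in $\alpha$ and, for each $i$, choose $w'_i \in E$ with $\Rk(w'_i; \mathcal{W}') \ge \beta_i$; arrange (by thinning) that the $w'_i$ are pairwise distinct. Now set $S_i := \{w'_i\}$ — or, if one wants the $S_i$ to exhaust $E$, distribute the remaining successors arbitrarily among the pieces. These are pairwise disjoint subsets of $(\mathcal{W}_{\{\overline{w}\}})^{k+1}$. For each $i$, consider $\mathcal{W}_{S_i} = (\mathcal{W}_{\{\overline{w}\}})_{S_i}$: its nodes at level $\le k$ are exactly $\emptyset \subseteq \overline{w}\restriction[1] \subseteq \dots \subseteq \overline{w}$, and above level $k$ it retains precisely the full subtree above $w'_i$ (since $w'_i \in S_i$ is the only constraint, everything comparable with $w'_i$ survives, and that is exactly $\{v : v \subseteq w'_i\} \cup \{v : v \supseteq w'_i\}$). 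By part (1) applied inside this pruned set, $\Rk(w'_i; \mathcal{W}_{S_i}) = \Rk(w'_i; \mathcal{W}') \ge \beta_i$. Then $\Rk(\overline{w}; \mathcal{W}_{S_i}) \ge \beta_i + 1$ for every $i$, and since the $\beta_i$ are cofinal in $\alpha$ and $\alpha$ is a limit, $\Rk(\overline{w}; \mathcal{W}_{S_i}) \ge \alpha$; it is $\le \alpha$ because $w'_i$ is the unique level-$(k+1)$ node and its rank is $\le \alpha$ is not automatic — rather, we note $\Rk(\overline{w}; \mathcal{W}_{S_i})$ need only be $\ge \alpha$ for the conclusion, and climbing back down the $k$-node chain $\overline{w}\restriction[k-1] \supseteq \dots \supseteq \emptyset$ from $\mathcal{W}$ (these nodes and their unique surviving successors are shared) gives $\Rk(\emptyset; \mathcal{W}_{S_i}) \ge \alpha + k$; the reverse inequality holds since $\mathcal{W}_{S_i} \subseteq \mathcal{W}$ and rank is monotone under inclusion of allowed-diagram sets. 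Hence $\Rk(\emptyset; \mathcal{W}_{S_i}) = \alpha + k$ for all $i < \cf(\alpha)$.

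The main obstacle I anticipate is the bookkeeping at the successor levels: verifying that descending from $\alpha+k$ to $\overline{w}$ at rank $\alpha$ genuinely works requires knowing that for a node of successor rank $\gamma+1$, some immediate successor has rank exactly $\gamma$ — this is where the well-foundedness of the tree (or rather the fact that rank is defined via suprema attained at successors) is used, and it should be isolated as a small preliminary lemma or cited from the standard theory of ranked well-founded trees. The rest is careful tracking of which nodes survive each pruning, which the "comparable with $S$" definition of $\mathcal{W}_S$ makes transparent.
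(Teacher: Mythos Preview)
Your treatment of part (1) and of the existence of $\overline{w}\in\mathcal{W}^k$ with $\Rk(\overline{w};\mathcal{W})=\alpha$ is fine and matches the paper's argument.

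The ``moreover'' clause, however, contains a genuine error. With $S_i:=\{w'_i\}$ a singleton, the pruned set $\mathcal{W}_{S_i}$ has $w'_i$ as the \emph{unique} node at level $k+1$, and by part (1) its rank there equals $\Rk(w'_i;\mathcal{W})$, which is some ordinal $\gamma_i<\alpha$ (every immediate successor of $\overline{w}$ has rank strictly below $\alpha$, since $\Rk(\overline{w};\mathcal{W})=\alpha$). Hence $\Rk(\overline{w};\mathcal{W}_{S_i})=\gamma_i+1<\alpha$, and climbing back to the root gives only $\Rk(\emptyset;\mathcal{W}_{S_i})=\gamma_i+1+k<\alpha+k$. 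Your sentence ``since the $\beta_i$ are cofinal in $\alpha$ \dots\ $\Rk(\overline{w};\mathcal{W}_{S_i})\ge\alpha$'' conflates a family of bounds indexed by $i$ with a bound for a single fixed $i$: for each fixed $i$ you have established only $\Rk(\overline{w};\mathcal{W}_{S_i})\ge\beta_i+1$, and cofinality of the $\beta_j$ as $j$ varies says nothing further about this particular $\mathcal{W}_{S_i}$. The optional remark about ``distributing the remaining successors arbitrarily'' does not repair this, since an arbitrary distribution need not put successors of cofinally large rank into every piece.

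The fix, which is what the paper does, is to ensure that \emph{each} $S_i$ already contains successors of $\overline{w}$ whose ranks are cofinal in $\alpha$. Concretely: choose distinct $u_\eta\in(\mathcal{W}_{\{\overline{w}\}})^{k+1}$ for $\eta<\cf(\alpha)$ with $\langle\Rk(u_\eta;\mathcal{W})\rangle$ strictly increasing and cofinal in $\alpha$, partition $\cf(\alpha)$ into $\cf(\alpha)$ many pairwise disjoint cofinal sets $\{A_i\mid i<\cf(\alpha)\}$, and set $S_i:=\{u_\eta\mid\eta\in A_i\}$. Then for each fixed $i$ the node $\overline{w}$ has, in $\mathcal{W}_{S_i}$, successors of rank cofinal in $\alpha$, so $\Rk(\overline{w};\mathcal{W}_{S_i})=\alpha$ and hence $\Rk(\emptyset;\mathcal{W}_{S_i})=\alpha+k$ as required.
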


\begin{proof}
	(1) is immediate from the definitions, so we only provide a proof of (2). We first find $\overline{w} \in \mathcal{W}^k$ with $\Rk(\overline{w}; \mathcal{W}) = \alpha$. Using the definition of the existence rank and the fact that $\Rk(\emptyset; \mathcal{W}) = \alpha + k$, we can easily recursively define a sequence $\langle w_j \mid j \leq k \rangle$ such that:
	\begin{itemize}
		\item{$w_0 = \emptyset$;}
		\item{for all $j \leq k$, $w_j \in \mathcal{W}^j$;}
		\item{for all $j_0 < j_1 \leq k$, $w_{j_0} \subset w_{j_1}$;}
		\item{for all $j \leq k$, $\Rk(w_j; \mathcal{W}) = \alpha + k - j$.}
	\end{itemize}
	Then, letting $\overline{w} = w_k$, we see that $\overline{w}$ is as required in the statement of the proposition.

	For the `moreover' clause, note that, since $\Rk(\overline{w}; \mathcal{W}) = \alpha$ and $\alpha$ is a limit ordinal, we can find $\{u_\eta \mid \eta < \cf(\alpha)\}$ such that:
	\begin{itemize}
		\item{for all $\eta < \cf(\alpha)$, $u_\eta \in \mathcal{W}^{k+1}$ and $\overline{w} \subset u_\eta$;}
		\item{letting $\alpha_\eta = \Rk(u_\eta; \mathcal{W})$ for all $\eta < \cf(\alpha)$, we have that $\langle \alpha_\eta \mid \eta < \cf(\alpha) \rangle$ is a strictly increasing sequence, cofinal in $\alpha$.}
	\end{itemize}
	Partition $\cf(\alpha)$ into disjoint cofinal sets $\{A_i \mid i < \cf(\alpha)\}$ and, for $i < \cf(\alpha)$, let $S_i = \{u_\eta \mid \eta \in A_i\}$. Then, for all $i < \cf(\alpha)$, it is easy to check by the definitions that $S_i \subset (\mathcal{W}_{\overline{w}})^{k+1}$ and $\Rk(\emptyset; \mathcal{W}_{S_i}) = \alpha + k$. 
\end{proof}

The following lemma establishes that, if $\Rk(\emptyset; \mathcal{W})$ is bounded, then there is a bound on the size of the models in $K(\mathcal{W})$.

\begin{lemma}
\label{rank-size}
Let $\mathcal{W}$ be a set of allowed diagrams in $L$, and let $K=K(\mathcal{W})$ be the corresponding coloring class. Let $w:[n]\to \mathcal{R}$ be an element of $\mathcal{W}$ such that $\Rk(w;\mathcal{W}) < \beta+k$, where $\beta$ is either a limit ordinal or 0 and $k$ is a natural number. Then for any $M\in K(\mathcal{W}_{\{w\}})$ we have $\|M\|\le \beth_{\beta+nk+k(k-1)/2}(|L|)$.

In particular, if $\Rk(\emptyset;\mathcal{W})=\alpha<\infty$, $\alpha=\beta+k$ for a limit ordinal $\beta$ and a natural number $k$, then any model of $K(\mathcal{W})$ has size at most $\beth_{\beta+{k+1 \choose 2}}(|L|)$.
\end{lemma}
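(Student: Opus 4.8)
The plan is to deduce the Lemma from the following reformulation, proved by transfinite induction on the ordinal $\gamma=\Rk(w;\mathcal{W})$: for every set of allowed diagrams $\mathcal{V}$ in $L$, every $n<\omega$, every $w\in\mathcal{V}^n$ with $\Rk(w;\mathcal{V})=\gamma$, and every $M\in K(\mathcal{V}_{\{w\}})$, one has $\|M\|\le F(\gamma,n)$, where $F(\gamma,n):=\beth_{\beta_\gamma+n(k_\gamma+1)+\binom{k_\gamma+1}{2}}(|L|)$ and $\gamma=\beta_\gamma+k_\gamma$ with $\beta_\gamma$ a limit ordinal or $0$ and $k_\gamma<\omega$. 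Once this is established, the first assertion of the Lemma follows from the easy inequality $F(\gamma,n)\le\beth_{\beta+nk+\binom{k}{2}}(|L|)$ whenever $\gamma<\beta+k$ (one uses that $F(\cdot,n)$ is nondecreasing, so for $k\ge 1$ it suffices to treat $\gamma=\beta+(k-1)$, and that $\beta_\gamma+(\text{finite})<\beta$ when $\gamma<\beta$ and $\beta$ is a limit), and the ``in particular'' clause is the case $w=\emptyset$, $n=0$.

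For the inductive step one fixes $\mathcal{V}$, $n$, $w\in\mathcal{V}^n$ with $\Rk(w;\mathcal{V})=\gamma$, and $M\in K(\mathcal{V}_{\{w\}})$, and puts $\nu:=\sup\big(\{|L|\}\cup\{F(\Rk(w';\mathcal{V}),n+1):w'\in\mathcal{V}^{n+1},\ w\subsetneq w'\}\big)$. A routine computation of $\beth$-exponents, using continuity of $\beth$ at limits when $\gamma$ is a limit ordinal, shows $\nu\ge|L|$ and $F(\gamma,n)=\beth_n(\nu)$; moreover any $w'\in\mathcal{V}^{n+1}$ with $w\subsetneq w'$ witnesses $\Rk(w;\mathcal{V})\ge\Rk(w';\mathcal{V})+1$, so $\Rk(w';\mathcal{V})<\gamma$ and $F(\Rk(w';\mathcal{V}),n+1)\le\nu$. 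Suppose toward a contradiction that $\|M\|>\beth_n(\nu)$. Colour $[|M|]^{n+1}$ by sending $\{a_1<\dots<a_{n+1}\}$ to the function $u\mapsto c_M(\{a_i:i\in u\})$ for $\emptyset\ne u\subseteq[n+1]$; this uses at most $|L|\le\nu$ colours, so by the Erd\H{o}s--Rado theorem ($\beth_n(\nu)^+\to(\nu^+)^{n+1}_\nu$) there is a homogeneous $Y\subseteq|M|$ with $\|Y\|=\nu^+$.

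The key point is that homogeneity forces, for each $m$ with $1\le m\le n+1$, the colour $c_M$ to be constant, say equal to $\rho_m$, on all of $[Y]^m$: inside the infinite set $Y$ a sufficiently spread-out $m$-element subset can be enlarged, by inserting further elements into the appropriate gaps, to $(n+1)$-element subsets realizing every pattern of positions, so the homogeneous value attached to an $m$-element subset depends only on $m$. Granting this, every $B\in[Y]^n$ is monochromatic with diagram $(\rho_1,\dots,\rho_n)$, which lies in $\mathcal{V}$, is comparable with $w$, and has length $n$, hence equals $w$; and for $a\in Y\setminus B$ the set $B\cup\{a\}$ is monochromatic with diagram $w':=(\rho_1,\dots,\rho_{n+1})\in\mathcal{V}^{n+1}$, a proper extension of $w$ independent of $a$. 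If $\gamma=0$ this contradicts $\Rk(w;\mathcal{V})=0$, so $\|M\|\le\beth_n(|L|)=F(0,n)$. If $\gamma\ge 1$, then $Y$ with its induced colouring lies in $K(\mathcal{V}_{\{w'\}})$ --- every finite monochromatic $A\subseteq Y$ is a finite monochromatic substructure of $M$, so $d_A\in\mathcal{V}$, and comparison of $d_A$ with the $\rho_m$'s (treating the cases $|A|\le n+1$ and $|A|>n+1$) shows $d_A$ is comparable with $w'$ --- so by Proposition~\ref{pruned_rank}(1) we have $\Rk(w';\mathcal{V}_{\{w'\}})=\Rk(w';\mathcal{V})<\gamma$, and the induction hypothesis applied to $K(\mathcal{V}_{\{w'\}})$ and the node $w'$ of length $n+1$ gives $\|Y\|\le F(\Rk(w';\mathcal{V}),n+1)\le\nu$, contradicting $\|Y\|=\nu^+$. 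Hence $\|M\|\le\beth_n(\nu)=F(\gamma,n)$, which closes the induction.

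The main obstacle is precisely the ``key point'' above: passing from homogeneity for the full quantifier-free type colouring of $(n+1)$-tuples to literal constancy of $c_M$ on $m$-element subsets for every $m\le n+1$. This is the standard device behind Ramsey-type theorems for structures rather than bare sets, but it needs the position-sliding argument together with some care that the required patterns of positions are all realizable inside the (infinite) homogeneous set. The remaining ingredients --- the arithmetic of $\beth$-exponents, the fact that $\mathcal{V}_{\{w'\}}$ is again a set of allowed diagrams, and the invariance of the rank of $w'$ under this pruning (Proposition~\ref{pruned_rank}(1)) --- are routine.
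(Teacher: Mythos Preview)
Your argument is correct and follows the same Erd\H{o}s--Rado induction as the paper, but the step you flag as the ``main obstacle'' --- the position-sliding argument forcing $c_M$ to be constant on $[Y]^m$ for each $m\le n+1$ --- is unnecessary. The hypothesis $M\in K(\mathcal{V}_{\{w\}})$ already forces $c_M(A)=w(m)$ for \emph{every} $A\in[|M|]^m$ and every $m\le n$, before any appeal to Erd\H{o}s--Rado: by induction on $m$, each $m$-subset of $M$ is monochromatic (all its proper subsets being coloured according to $w$ by the inductive hypothesis), so its diagram lies in $\mathcal{V}_{\{w\}}$ and, having length $m\le n$, must equal $w\restriction[m]$. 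Consequently your full-type colouring of $(n+1)$-subsets collapses to the paper's simpler colouring by the single value $c_M(A)\in\mathcal{R}_{n+1}$, and the homogeneous set $Y$ automatically satisfies $\rho_m=w(m)$ for $m\le n$; only $\rho_{n+1}$ needs to be extracted from homogeneity. This is exactly how the paper proceeds, in one line.

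The remaining differences are organizational: you induct on the exact rank $\gamma=\Rk(w;\mathcal{V})$ and package the bound as $F(\gamma,n)=\beth_n(\nu)$ with $\nu$ the supremum over immediate successors, whereas the paper inducts on the upper bound $\beta+k$ and carries the explicit $\beth$-index through the computation. Both give the same inequality; your formulation is slightly cleaner at limit $\gamma$ (continuity of $\beth$ does the work) while the paper's is more direct at successors.
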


\begin{proof}
We use induction on $\alpha=\beta+k$. If $\Rk(w;\mathcal{W}) < 1$, then $M$ has at most $n$ elements. If $\alpha$ is a limit ordinal and $\Rk(w;\mathcal{W}) < \alpha$, then $\Rk(w;\mathcal{W}) < \gamma$ for some successor ordinal $\gamma<\alpha$, and the conclusion follows from the inductive hypothesis.

It remains to consider the successor case. Suppose for contradiction that $w:[n]\to \mathcal{R}$ is a function in $\mathcal{W}$ such that $\Rk(w;\mathcal{W}) < \beta+k+1$, but there is $M\in K(\mathcal{W}_{\{w\}})$, with coloring function $c_M$, such that $\|M\|\ge (\beth_{\beta+(k+1)n+k(k+1)/2}(|L|))^+$.  By the Erd\H{o}s--Rado theorem, there is a substructure $M_1\subset M$ of size $(\beth_{\beta+nk+k(k+1)/2}(|L|))^+=(\beth_{\beta+(n+1)k+k(k-1)/2}(|L|))^+$ and $P\in \mathcal{R}_{n+1}$ such that $c_M(A)=P$ for all $A \in [M_1]^{n+1}$. Now we extend $w$ to the function  $w':[n+1]\to \mathcal{R}$ by letting $w'(n+1):=P$. Note that $w'\in\mathcal{W}$ and that $M_1\in K(\mathcal{W}_{\{w'\}})$.
By the induction hypothesis, $\Rk(w';\mathcal{W})\ge \beta+k$, and thus $\Rk(w;\mathcal{W})\ge \beta+k+1$, a contradiction.

For the last statement, the assumption implies that $\Rk(\emptyset;\mathcal{W}) < \beta+k+1$, and the bound established above (with $n=0$) gives the needed result.
\end{proof}

\begin{corollary}
\label{large_model_corollary}
Let $K=K(\mathcal{W})$ be a coloring class, let $\lambda\ge \beth_{|L|^+}$, and suppose that $K_\lambda$ is non-empty. Then 
\begin{enumerate}
\item
$K$ has models in all powers;
\item
moreover, $K$ has no maximal models; and
\item
for any triple of models $M_1\subset M_2,M_3$ of $K$ and any set $X$ disjoint from $|M_2|\cup |M_3|$, there is a triple of models $N_1\subset N_2,N_3$ in $K$ such that $M_i\subset N_i$ and $|N_i|=|M_i|\cup X$ for $i=1,2,3$.
\end{enumerate}
\end{corollary}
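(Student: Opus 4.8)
The plan is to derive all three parts from Lemma \ref{rank-size} together with Proposition \ref{infinite_rank} and Proposition \ref{pruned_rank}. The first observation is that the hypothesis $\lambda \geq \beth_{|L|^+}$ and $K_\lambda \neq \emptyset$ force $\Rk(\emptyset; \mathcal{W}) = \infty$: if $\Rk(\emptyset; \mathcal{W})$ were some ordinal $\alpha < \infty$, then $\alpha < |L|^+$ by Proposition \ref{infinite_rank}, and writing $\alpha = \beta + k$ with $\beta$ a limit or $0$, the last clause of Lemma \ref{rank-size} would bound every model of $K(\mathcal{W})$ by $\beth_{\beta + \binom{k+1}{2}}(|L|)$, which is strictly below $\beth_{|L|^+}$ since $\beta + \binom{k+1}{2} < |L|^+$; this contradicts $K_\lambda \neq \emptyset$. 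So $\Rk(\emptyset; \mathcal{W}) = \infty$, and by Proposition \ref{infinite_rank} there is an infinite monochromatic structure $M^\infty \in K(\mathcal{W})$, equivalently a function $d : \omega_+ \to \mathcal{R}$ with $d \restriction [n] \in \mathcal{W}$ for all $n$.

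For part (1) and part (2): given any model $M \in K$ and any cardinal $\mu$ (for (1)) or any cardinal $\mu > \|M\|$ (for (2)), I want to produce $N \in K$ with $M \subseteq N$ and $\|N\| = \mu$ (for (2); for (1) just build a model of size $\mu$ from scratch when $M = \emptyset$ is not available, or more uniformly always work relative to a given $M$). The idea is to adjoin a set $X$ of fresh points of the appropriate cardinality and extend the coloring function: color every finite subset of $X \cup |M|$ that is not already colored by $c_M$ by copying the diagram $d$ along the "$X$-direction." Concretely, for a finite set $A$ meeting $X$, one declares $c_N(A)$ to depend only on $|A|$ via $d$, or — more carefully, to avoid creating forbidden monochromatic sets that straddle $|M|$ and $X$ — one uses the standard trick of well-ordering $X$ and assigning colors so that no infinite or over-long monochromatic set can form except inside an honest copy of $d$; Proposition \ref{infinite_rank} guarantees $d$ extends arbitrarily, so each finite monochromatic substructure of $N$ has diagram in $\mathcal{W}$. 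This is essentially the construction already used implicitly in the proof of $(3) \Rightarrow (1)$ in Proposition \ref{infinite_rank}. Checking $N \in K(\mathcal{W})$ reduces to checking diagrams of finite monochromatic substructures, which is routine once the coloring is set up so that the only monochromatic substructures are the built-in copies of initial segments of $d$ (together with those already inside $M$).

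Part (3) is the amalgamation-friendly version: given $M_1 \subseteq M_2, M_3$ and $X$ disjoint from $|M_2| \cup |M_3|$, I perform the extension of part (2) "simultaneously and coherently" on all three models. That is, fix once and for all, for each finite $A$ that meets $X$, the color it will receive — using the same rule (driven by $d$, with a well-order of $X$) regardless of which $M_i$ we are inside — and let $N_i$ be $M_i$ together with $X$ equipped with $c_{M_i}$ on $[|M_i|]^{<\omega}$ and the fixed colors on everything else. Since the rule for sets meeting $X$ makes no reference to which $M_i$ we are in, and since $M_1 \subseteq M_2, M_3$ means $c_{M_1} = c_{M_2} \restriction [|M_1|]^{<\omega} = c_{M_3} \restriction [|M_1|]^{<\omega}$, we automatically get $N_1 \subseteq N_2$ and $N_1 \subseteq N_3$ as substructures, and each $N_i \in K$ by the verification from part (2).

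The main obstacle is arranging the coloring on sets meeting $X$ so that (a) no forbidden monochromatic substructure is created — in particular no long monochromatic set mixing points of $X$ with points of some $|M_i| \setminus |M_1|$, where the colors are different on the $M_2$- and $M_3$-sides — while (b) the rule genuinely does not depend on $i$. The resolution is the one already in the excerpt: exploit that we only need to specify $c_{N_i}$ on sets of the form $C \cup (\text{nonempty subset of } X)$, and use a well-ordering of $X$ (say of order type $\mu$) so that, apart from the designated copy of $d$ living purely on an increasing $\omega$-sequence in $X$, every sufficiently large set meeting $X$ receives a color forcing non-monochromaticity (possible since $|\mathcal{R}_k| \geq 1$ for all $k$ and we have freedom once we are past the relevant threshold). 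Because this mechanism only ever looks at $|A \cap X|$ and the $X$-order, it is visibly $i$-independent, giving (b); and it kills all straddling monochromatic sets, giving (a). The remaining checks — that finitely many colors suffice, that the diagrams that do occur lie in $\mathcal{W}$, and that $\|N_i\| = \|M_i\| + |X|$ as required — are then routine.
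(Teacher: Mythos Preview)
Your overall strategy matches the paper's: derive $\Rk(\emptyset;\mathcal{W}) = \infty$ from Lemma~\ref{rank-size} and Proposition~\ref{infinite_rank}, obtain an infinite allowed diagram $d$, and use it to extend colorings. Your first instinct for the extension---``for a finite set $A$ meeting $X$, declare $c_N(A) = d(|A|)$''---is exactly what the paper does, and it works without any well-ordering trick. The ``obstacle'' you then try to circumvent is not real.

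Here is the verification you are missing. Suppose $A \subseteq |N_i|$ is finite, monochromatic, and $A \cap X \neq \emptyset$; say $|A| = n$. For each $1 \leq k \leq n$, since $A$ contains an element of $X$, there is some $k$-element subset $A_k \subseteq A$ with $A_k \cap X \neq \emptyset$, and by definition $c_{N_i}(A_k) = d(k)$. Monochromaticity of $A$ then forces \emph{every} $k$-element subset of $A$ to have color $d(k)$, so $d_A = d \restriction [n] \in \mathcal{W}$. Thus no forbidden monochromatic set can appear: any monochromatic set either lies entirely inside $M_i$ (and was already allowed) or meets $X$ (and has diagram an initial segment of $d$). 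Your proposed well-ordering mechanism, besides being vague---note you cannot in general ``force non-monochromaticity'' when $|\mathcal{R}_k| = 1$---is solving a nonexistent problem.

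For part~(3), the ``main obstacle'' you describe, about colors differing on the $M_2$- and $M_3$-sides, is likewise illusory: the statement imposes no compatibility between $N_2$ and $N_3$ beyond each containing $N_1$, so the three $N_i$ are verified separately by the argument above. The only remaining check is $N_1 \subset N_i$ as substructures, and your observation that the coloring rule on sets meeting $X$ is independent of $i$ is precisely what is needed there.
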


\begin{proof}
Since $K$ has a model of size $\beth_{|L|^+} = \beth_{|L|^+}(|L|)$, the rank $\Rk(\emptyset;\mathcal{W})$ is at least $|L|^+$ by Lemma~\ref{rank-size}. Using Proposition~\ref{infinite_rank}, we get an infinite monochromatic structure $M\in K(\mathcal{W})$ with diagram $d:=d_M$. Therefore, the monochromatic structure on $\mu$ with the diagram $d$ is a model in $K_\mu$.

The second statement follows from the third by taking $M_1=M_2=M_3$ and $X$ a set containing elements not in $|M_1|$. So we prove the third statement. 

Take  $M_1\subset M_2,M_3$ and $X$ as in (3). For each $i=1,2,3$, define the coloring function $c_{N_i}$ on $|N_i|:=|M_i|\cup X$ as follows. For $A\in [|N_i|]^n$, if $A\subset |M_i|$, then $c_{N_i}(A):=c_{M_i}(A)$; otherwise, let $c_{N_i}(A):=d(n)$. 

It remains to check that the resulting coloring functions give the needed models in $K(\mathcal{W})$. We first fix $i\in \{1,2,3\}$, and show that $N_i\in K(\mathcal{W})$. Take an arbitrary non-empty finite monochromatic substructure $A$ of $N_i$ and let $n=\|A\|$. If $A\subset M_i$, then the diagram of $A$ is in $\mathcal{W}$ since $M_i\in K(\mathcal{W})$. Suppose now that the universe of $A$ contains elements of the set $X$. We claim that in that case, the diagram $d_A$ of $A$ is equal to $d\restriction [n]$ (recall that $d$ is the diagram of the infinite structure in $K$ that was used to define $N_i$). Indeed, for any $k\in [n]$, there is a substructure $A_k$ of $A$ such that $|A_k|\cap X\ne \emptyset$. By definition, we have $c_{N_i}(A_k)=d(k)$. Since $A$ is monochromatic, every $k$-element substructure of $A$ has the same color; thus $d_A(k)=d(k)$. Since $d\restriction [n]\in \mathcal{W}$, the diagram of $A$ is in $\mathcal{W}$, so $N_i\in K$.

Now we check $N_1\subset N_i$, $i=2,3$. For this, it suffices to check that every finite substructure $A$ of $N_1$ is a substructure of $N_i$, $i=2,3$. We do this by induction on $\|A\|$ (in this case, we may start with the empty substructure). Take $A\subset N_1$, $\|A\|=n$, and suppose that all proper substructures of $A$ are substructures of $N_2$, $N_3$. If $A\subset M_1$, then $A\subset M_2,M_3$ and it immediately follows from the definitions that $A\subset N_2,N_3$. If $A\not\subset M_1$, then $|A|\cap X\ne \emptyset$, so $d_{N_i}(|A|)=d(n)$ for $i=1,2,3$. Since all proper substructures of $A$ are substructures of $N_2$, $N_3$ by the induction hypothesis, it now follows that $A\subset N_2,N_3$.

The remaining properties follow directly from the definitions.
\end{proof}

\begin{lemma}
\label{large_dap_rank}
Let $K=K(\mathcal{W})$ be a coloring class, let $\lambda\ge \beth_{|L|^+}$, and suppose that $K_\lambda$ is non-empty and has the disjoint amalgamation property. Then for every $w\in \mathcal{W}$, we have $\Rk(w;\mathcal{W})=\infty$.
\end{lemma}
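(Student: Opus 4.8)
The plan is to argue by contradiction: suppose $w \in \mathcal{W}$ has $\Rk(w; \mathcal{W}) = \alpha < \infty$, chosen with $\alpha$ minimal among the ranks of elements of $\mathcal{W}$ that are finite, and derive a failure of the disjoint amalgamation property in some power $\le \lambda$. By Proposition~\ref{infinite_rank}, $\alpha < |L|^+$; write $\alpha = \beta + k$ with $\beta$ limit or $0$ and $k < \omega$. The idea is that a node of finite rank gives us, via Lemma~\ref{rank-size}, a ``ceiling'' on the size of monochromatic pieces built on that node, and by using Proposition~\ref{pruned_rank}(2) to split $\mathcal{W}$ below a rank-$\alpha$ node into $\cf(\beta)$-many pruned subtrees each still of rank $\alpha+k'$ (where $k'$ is the relevant offset), we can manufacture two $\mathcal{W}$-colorings on a common base $X$ of an appropriate size $\le \lambda$ that each extend to add one point, but whose amalgam would require a monochromatic configuration exceeding the ceiling in every one of the pruned subtrees simultaneously — a contradiction.

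More concretely, first I would reduce to the case $w = \emptyset$: by Proposition~\ref{pruned_rank}(1), $\Rk(w; \mathcal{W}) = \Rk(w; \mathcal{W}_{\{w\}})$, and working inside $\mathcal{W}_{\{w\}}$ one can (this needs checking) transfer the disjoint amalgamation hypothesis from $K(\mathcal{W})$ in power $\lambda$ down to the relevant sub-configurations; so assume $\Rk(\emptyset; \mathcal{W}) = \alpha = \beta + k$. Apply Proposition~\ref{pruned_rank}(2) to get $\overline{w} \in \mathcal{W}^k$ with $\Rk(\overline{w}; \mathcal{W}) = \beta$ and disjoint sets $S_i \subset (\mathcal{W}_{\{\overline{w}\}})^{k+1}$, $i < \cf(\beta)$, with $\Rk(\emptyset; \mathcal{W}_{S_i}) = \alpha$. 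Now build, for a set $X$ of the right cardinality (just below the Erd\H{o}s--Rado threshold governed by $\beta$ and $k$, hence $\le \beth_{|L|^+} \le \lambda$), a $\mathcal{W}_{S_0}$-coloring realizing $\overline{w}$ on a $k$-element monochromatic subset and then an extension $c_1$ adding a point $a_1$ so that the monochromatic types through $a_1$ that extend $\overline{w}$ are exactly those in $S_0$; similarly build $c_2$ adding $a_2$ using $S_1$. The two colorings agree on $[X]^{<\omega}$ (both restrict to the same ambient $\mathcal{W}_{\{\overline{w}\}}$-coloring on $X$), so disjoint amalgamation in $\lambda$ — after padding $X$ up to size $\lambda$ using Corollary~\ref{large_model_corollary}(3), or rather its analogue, to keep a ceiling-sized monochromatic core — gives a $\mathcal{W}$-coloring $c \supseteq c_1 \cup c_2$. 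Then the set $\{a_1, a_2\} \cup B$ for a large monochromatic $B \subseteq X$ would be forced to have a diagram in $\mathcal{W}_{S_0} \cap \mathcal{W}_{S_1}$ above $\overline{w}$, but by disjointness of $S_0$ and $S_1$ (and since any common extension of elements of $S_0$ and $S_1$ past level $k+1$ would have to agree at level $k+1$, contradicting $S_0 \cap S_1 = \emptyset$) such a diagram cannot have large rank, so $B$ cannot be taken large enough — contradicting Erd\H{o}s--Rado applied to $c$ restricted to $X$, whose size was chosen to force a monochromatic set of the required size.

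The main obstacle I anticipate is making the ``ceiling vs.\ cofinal splitting'' bookkeeping precise: I must choose $\|X\|$ large enough that Erd\H{o}s--Rado, applied inside the amalgam $c$, yields a monochromatic $B \subseteq X$ on which the diagram (together with $a_1$ and with $a_2$) climbs high enough to exceed what a node of rank $\alpha$ in $\mathcal{W}_{S_i}$ can support (this is exactly Lemma~\ref{rank-size}, read contrapositively), yet small enough that $\|X\| \le \lambda$ — and the two requirements are compatible precisely because $\alpha < |L|^+$ forces the relevant $\beth$-bound below $\beth_{|L|^+}$. A secondary subtlety is ensuring the colorings $c_1, c_2$ actually lie in $K(\mathcal{W})$, i.e.\ that every monochromatic substructure through $a_i$ has diagram in $\mathcal{W}$: this is arranged by only ever assigning, to sets meeting $\{a_i\}$, colors dictated by an honest infinite monochromatic structure over $\overline{w}$ within $\mathcal{W}_{S_i}$ for the ``core'' part and then making all sufficiently large sets through $a_i$ non-monochromatic, as in the constructions in the proof of Proposition~\ref{ap_equiv_dap}. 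Finally, a clean way to get the contradiction is to split into the two cases mirroring Cases~2 and~3 there — either some monochromatic diagram above $d_{\{a_1\}}$ is already unrealizable in $X \cup \{a_2\}$ (immediate failure) or all are realizable, in which case the amalgam forces the forbidden high-rank diagram over $\overline{w}$.
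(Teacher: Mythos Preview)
Your approach is far more elaborate than the paper's, and the very first move---the reduction to ``assume $\Rk(\emptyset;\mathcal{W})=\alpha$''---does not go through. Passing from $\mathcal{W}$ to $\mathcal{W}_{\{w\}}$ indeed makes $\Rk(\emptyset;\mathcal{W}_{\{w\}})$ finite, but then Lemma~\ref{rank-size} forces $K(\mathcal{W}_{\{w\}})_\lambda=\emptyset$ for every $\lambda\ge\beth_{|L|^+}$, so there is no disjoint amalgamation hypothesis left to ``transfer.'' The parenthetical ``(this needs checking)'' is exactly where the plan breaks: you cannot both keep a nonempty $K_\lambda$ with DAP and have $\Rk(\emptyset;\mathcal{W})<\infty$. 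Your subsequent machinery (the disjoint $S_0,S_1$ from Proposition~\ref{pruned_rank}(2), the hand-built $c_1,c_2$, the Erd\H{o}s--Rado step) is all predicated on this reduction, and even setting that aside, the final contradiction is not clearly forced: you assert that $\{a_1,a_2\}\cup B$ would have to be monochromatic with diagram in $\mathcal{W}_{S_0}\cap\mathcal{W}_{S_1}$, but nothing in the amalgam compels sets through \emph{both} $a_1$ and $a_2$ to be monochromatic at all, so the ``ceiling'' is never reached.

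The paper sidesteps all of this. It keeps the ambient class $K(\mathcal{W})$ throughout and uses the pruned class $K(\mathcal{W}_{\{w\}})$ only to \emph{locate} a small obstruction: since $K(\mathcal{W}_{\{w\}})$ is bounded (Lemma~\ref{rank-size}), there is some $\kappa<\beth_{|L|^+}$ and a triple $M_1\subset M_2,M_3$ in $K(\mathcal{W}_{\{w\}})$ of size $\kappa$ that cannot be disjointly amalgamated. One then applies Corollary~\ref{large_model_corollary}(3) to pad each $M_i$ to a model $N_i\in K(\mathcal{W})$ of size $\lambda$, with $N_1\subset N_2,N_3$; the disjoint amalgam of $N_2,N_3$ over $N_1$ (guaranteed by DAP at $\lambda$), restricted to $|M_2|\cup|M_3|$, is then an amalgam of $M_2,M_3$ over $M_1$---contradiction. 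No cofinal splitting, no explicit colorings, no Erd\H{o}s--Rado bookkeeping: boundedness of the pruned class plus the padding corollary does all the work.
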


\begin{proof}
Suppose that there is $w\in \mathcal{W}$ with $\Rk(w;\mathcal{W})=\alpha<\infty$. Then by Lemma~\ref{rank-size}, the class $K(\mathcal{W}_{\{w\}})$ does not have a model of size $\beth_{|L|^+}$. Therefore, there is some $\kappa<\beth_{|L|^+}$ and models $M_1\subset M_2,M_3$ in $K(\mathcal{W}_{\{w\}})$ of size $\kappa$ such that $M_2$ and $M_3$ cannot be disjointly amalgamated over $M_1$. By Corollary~\ref{large_model_corollary}(3), we can find 
models $N_i\in K$, $i=1,2,3$ of size $\lambda$. Then the disjoint amalgam of $N_2$ and $N_3$ over $N_1$ gives the amalgam of $M_2$ and $M_3$ over $M_1$, a contradiction.
\end{proof}

\begin{proof}[Proof of Theorem~\ref{Hanf_upside}]
The class contains no maximal models by Corollary~\ref{large_model_corollary}. 

By Proposition~\ref{ap_equiv_dap}, it is enough to establish that the disjoint amalgamation property of $K_\lambda$, $\lambda\ge \beth_{|L|^+}$, implies the disjoint amalgamation for $K_\mu$, for any $\mu\ge \beth_{|L|^+}$.
If $K_\lambda$ has the disjoint amalgamation, then by Lemma~\ref{large_dap_rank} we have $\Rk(w;\mathcal{W})=\infty$ for every $w\in \mathcal{W}^1$. By Proposition~\ref{infinite_rank}, we have that for every $w\in \mathcal{W}^1$ there is an infinite monochromatic structure $M_w\in K$ with the diagram $d_w\supset w$. 

Take an arbitrary $\mu\ge \beth_{|L|^+}$. Given a special $(\mu,2)$-system $\{c_1,c_2\}$ of colorings, $c_i:[X\cup \{a_i\}]^{<\omega}\to \mathcal{R}$, if $c_1(a_1)\ne c_2(a_2)$, then the $\mathcal{W}$-coloring $c\supset c_1\cup c_2$ can be defined on finite sets of the form $C\cup \{a_1,a_2\}$ in an arbitrary way. If $c_1(a_1)=c_2(a_2)$, then we find an infinite monochromatic structure with diagram $d$ such that $d(1)=c_i(a_i)$ and define $c(C\cup\{a_1,a_2\}):=d(|C|+2)$. It is easy to check that the resulting coloring function $c$ is a $\mathcal{W}$-coloring.
\end{proof}

\section{Amalgamation may fail late}
\label{late_amalg}

In the previous section, assuming $\Rk(\emptyset;\mathcal{W}) < \infty$, we established an upper bound on the size of a maximal model of $K(\mathcal{W})$ as well as an upper bound on the power in which amalgamation fails provided there is $w \in \mathcal{W}$ such that $\Rk(w;\mathcal{W}) < \infty$.

In this section, we establish a lower bound on both the existence of models of $K(\mathcal{W})$ and on the size of models that can be disjointly amalgamated.

Define a class sequence of cardinals $\langle \kappa_\alpha \mid \alpha \in \On \rangle$ as follows:
\begin{itemize}
\item[]
For $\alpha < \omega$, $\kappa_\alpha = \alpha$.
\item[]
If $\alpha$ is a limit ordinal, $\kappa_\alpha = \sup(\{ \kappa_\beta \mid \beta < \alpha \})$.
\item[]
If $\beta \geq \omega$ and $\alpha = \beta + 1$, $\kappa_\alpha = 2^{\kappa_\beta}$.
\end{itemize}
Note that $\kappa_\alpha = \beth_\alpha$ for $\alpha \geq \omega^2$.

The main result of this section is the following theorem.

\begin{theorem}
\label{Hanf_downside}
Suppose $\Rk(\emptyset;\mathcal{W}) \geq \alpha$. Then 
\begin{enumerate}
\item
there is $M \in K(\mathcal{W})$ such that $\|M\| \geq \kappa_\alpha$;
\item
If $\Rk(w;\mathcal{W}) \geq \beta + 1$ for all $w\in \mathcal{W}^1$, then $K(\mathcal{W})$ has disjoint amalgamation for models of size $\lambda$ for all $\lambda\le \kappa_\beta$.
\end{enumerate}
\end{theorem}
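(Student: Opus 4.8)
The plan is to prove both parts by induction on $\alpha$, where for a fixed $\alpha$ one proves part~(1) simultaneously for \emph{all} relational languages with every arity nonempty and all sets of allowed diagrams over them --- this is what lets the trimming operation be applied freely in the inductive step --- and then derives part~(2) from part~(1). First I would dispose of the degenerate situations in~(1). If $\Rk(w;\mathcal{W})=\infty$ for some $w\in\mathcal{W}$, then Proposition~\ref{infinite_rank} already gives models in all powers; so I may assume every $\Rk(v;\mathcal{W})<\infty$, and then, $\mathcal{W}$ being a well-founded tree, prune it to a subtree $\mathcal{W}'\subseteq\mathcal{W}$ (still a set of allowed diagrams, and $K(\mathcal{W}')\subseteq K(\mathcal{W})$) with $\Rk(\emptyset;\mathcal{W}')=\alpha$ exactly; so without loss of generality $\Rk(\emptyset;\mathcal{W})=\alpha$. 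For $\alpha=n<\omega$ the rank supplies a chain $\emptyset=w_0\subset\cdots\subset w_n$ with $w_j\in\mathcal{W}^j$, and the monochromatic structure on $n$ elements with diagram $w_n$ lies in $K(\mathcal{W})$ (its $j$-element substructures have diagram $w_n\restriction[j]=w_j\in\mathcal{W}$) and has size $n=\kappa_n$; the same remark applies inside the classes $K(\mathcal{W}_{\{w\}})$ and $K(\mathcal{W}/\overline w)$ used below.

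For limit $\alpha\ge\omega$ the idea is to split $\mathcal{W}$ along its first level. Since $\Rk(\emptyset;\mathcal{W})=\alpha$ is a limit and equals $\sup_{w\in\mathcal{W}^1}(\Rk(w;\mathcal{W})+1)$, the ranks $\Rk(w;\mathcal{W})$ for $w\in\mathcal{W}^1$ are cofinal in $\alpha$, so I can choose \emph{distinct} $w_i\in\mathcal{W}^1$ ($i<\cf(\alpha)$) with $\langle\Rk(w_i;\mathcal{W})\mid i<\cf(\alpha)\rangle$ strictly increasing and cofinal in $\alpha$ (distinctness is automatic, as an element of $\mathcal{W}^1$ is determined by its single value). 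Because $\Rk(\emptyset;\mathcal{W}_{\{w_i\}})=\Rk(w_i;\mathcal{W})+1<\alpha$, the inductive hypothesis applied to $K(\mathcal{W}_{\{w_i\}})\subseteq K(\mathcal{W})$ yields a model $M_i$ with $\|M_i\|\ge\kappa_{\Rk(w_i;\mathcal{W})}$ in which every singleton has colour $w_i(1)$. I then take $M$ to be the disjoint union of the $M_i$, colouring any finite set meeting two different $M_i$ arbitrarily; such a set contains singletons of two distinct colours $w_i(1)\ne w_j(1)$ and so is not monochromatic, hence every finite monochromatic substructure of $M$ sits inside a single $M_i$ and has its diagram in $\mathcal{W}_{\{w_i\}}\subseteq\mathcal{W}$. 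Thus $M\in K(\mathcal{W})$ with $\|M\|\ge\sup_i\kappa_{\Rk(w_i;\mathcal{W})}=\kappa_\alpha$.

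The successor case $\alpha=\gamma+1$ with $\gamma\ge\omega$ is where the real work lies. I would choose $w^*\in\mathcal{W}^1$ with $\Rk(w^*;\mathcal{W})=\gamma$ and pass to the trimmed class $\mathcal{W}/w^*$ (Notation~\ref{allowed_diag_trimmed}), a set of allowed diagrams in the reindexed language $L^*$ with $\mathcal{R}^*_k=\mathcal{R}_{k+1}$ and $|L^*|=|L|$, for which $\Rk(\emptyset;\mathcal{W}/w^*)=\Rk(w^*;\mathcal{W})=\gamma<\alpha$; by the inductive hypothesis $K(\mathcal{W}/w^*)$ has a model, and after restricting to a subset I may assume it has universe $\{y_\xi\mid\xi<\kappa_\gamma\}$ of size $\kappa_\gamma$ with coloring function $c_N$. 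I then take the universe of $M$ to be $B={}^{\kappa_\gamma}2$ (the functions $\kappa_\gamma\to\{0,1\}$), ordered lexicographically, so $\|M\|=2^{\kappa_\gamma}=\kappa_{\gamma+1}$; I colour every singleton of $B$ by $w^*(1)$, and for a finite set $A=\{f_1<\cdots<f_m\}\subseteq B$ with $m\ge2$, with $\xi_i$ the least coordinate on which $f_i$ and $f_{i+1}$ differ, I colour $A$ by a colour extracted from $c_N$ applied to the set of splitting levels $\{y_{\xi_1},\dots,y_{\xi_{m-1}}\}$, encoding along the way enough of the pattern of $\langle\xi_i\rangle$ that the chosen colour has arity $m$ and that coincidences among the $\xi_i$ are visible in it. What must then be verified is: a monochromatic $A$ necessarily has the $\xi_i$ pairwise distinct (a ``caterpillar'' shape), $\{y_{\xi_1},\dots,y_{\xi_{m-1}}\}$ is then a monochromatic substructure of $N$, and $d_A$ is the concatenation of $(w^*(1))$ with the diagram of that substructure, which therefore lies in $\mathcal{W}$ because $N\in K(\mathcal{W}/w^*)$; and --- the delicate point --- that a configuration with a repeated splitting level can never be forced to be monochromatic with a diagram outside $\mathcal{W}$. \emph{This combinatorial management of the coloring on $B$ is the step I expect to be the main obstacle; the finite and limit cases and the reduction of~(2) to~(1) are comparatively routine manipulations of the pruning and trimming operations.}

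For part~(2), given a special $(\lambda,2)$-system $\{c_1,c_2\}$ with $|X|=\lambda\le\kappa_\beta$ and corresponding models $M_1,M_2$: if $c_1(\{a_1\})\ne c_2(\{a_2\})$, then $\{a_1,a_2\}$ is never monochromatic and any extension of $c_1\cup c_2$ to $[X\cup\{a_1,a_2\}]^{<\omega}$ is a $\mathcal{W}$-coloring. Otherwise put $p=c_1(\{a_1\})=c_2(\{a_2\})$; then $(p)\in\mathcal{W}^1$, so by hypothesis $\Rk((p);\mathcal{W})\ge\beta+1$, and I can fix $q\in\mathcal{R}_2$ with $(p,q)\in\mathcal{W}^2$ and $\Rk((p,q);\mathcal{W})\ge\beta$. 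Applying part~(1) to the trimmed class $\mathcal{W}/(p,q)$, which has $\Rk(\emptyset;\mathcal{W}/(p,q))=\Rk((p,q);\mathcal{W})\ge\beta$ in a language of cardinality $|L|$, produces a model $P$ with $\|P\|\ge\kappa_\beta\ge\lambda$; fix an injection $x\mapsto\hat x$ of $X$ into $|P|$. I then define $c$ on $[X\cup\{a_1,a_2\}]^{<\omega}$ to extend $c_1\cup c_2$, to send $\{a_1,a_2\}$ to $q$, and to send each $C\cup\{a_1,a_2\}$ with $\emptyset\ne C\in[X]^{<\omega}$ to the colour extracted from $c_P(\{\hat x\mid x\in C\})$. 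To see $c$ is a $\mathcal{W}$-coloring it suffices to consider a finite monochromatic substructure $F$ with $\{a_1,a_2\}\subseteq F=C\cup\{a_1,a_2\}$: monochromaticity of $F$ forces all singletons of $F$ to have colour $p$, all pairs colour $q$, the restrictions of $c_1$ and $c_2$ to $\{a_1\}\cup C$ and $\{a_2\}\cup C$ to agree under $a_1\leftrightarrow a_2$, and $\{\hat x\mid x\in C\}$ to be a monochromatic substructure of $P$ whose diagram, prefixed by $(p,q)$, is exactly $d_F$; since $P\in K(\mathcal{W}/(p,q))$ this prefixed diagram lies in $\mathcal{W}$. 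Hence $c$ disjointly amalgamates $c_1$ and $c_2$, and by the remark following the definition of special $(\lambda,2)$-systems, $K(\mathcal{W})$ has the disjoint amalgamation property in every $\lambda\le\kappa_\beta$.
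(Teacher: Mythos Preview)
Your reduction of part~(2) to part~(1), and your treatment of the finite and limit cases in part~(1), match the paper's argument essentially verbatim. The gap is in the successor step, precisely at the point you flag as ``the main obstacle.''

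The scheme you sketch---trim by a single $w^*\in\mathcal{W}^1$, take one model $N\in K(\mathcal{W}/w^*)$, and color $A\subseteq{}^{\kappa_\gamma}2$ via $c_N$ applied to its set of splitting levels---provides no mechanism forcing a monochromatic $A$ to have a \emph{monotone} sequence $\langle\xi_i\rangle$, and without monotonicity the correspondence breaks down: for four functions with $\Delta$-sequence $\langle 2,0,1\rangle$, the triples have splitting-level sets $\{0,2\}$ and $\{0,1\}$ but never $\{1,2\}$, so monochromaticity of $A$ leaves $c_N(\{y_1,y_2\})$ unconstrained and $\{y_0,y_1,y_2\}$ need not be monochromatic in $N$. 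The paper's device is different in kind: one writes $\alpha=\beta+k$ with $\beta$ limit, picks $\overline w\in\mathcal{W}^k$ of rank exactly $\beta$, and invokes Proposition~\ref{pruned_rank}(2) (which requires $\beta$ to be limit) to split the extensions of $\overline w$ at level $k+1$ into pairwise \emph{disjoint} sets $S_j$, one for each zigzag pattern $s\in{}^{k-1}2$, each still of full rank. A separate model $M_j\in K((\mathcal{W}_{S_j})/(\overline w\restriction[1]))$ is built for each pattern, and a $(k+1)$-set $X$ is colored from $M_j$ where $s_X=s_j$; disjointness of the $S_j$ then guarantees that any two $(k+1)$-subsets of a monochromatic $X$ share the same zigzag pattern, which forces $\Delta(X)$ to be strictly monotone and makes the splitting-level map carry $X$ to a genuine monochromatic subset of some $M_j$. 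Your single trimmed class cannot manufacture such a disjoint family---indeed when $\gamma$ itself is a successor there may be only one rank-witnessing extension of $w^*$, so no nontrivial disjoint splitting exists at level~$2$---and this is the missing ingredient, not merely a detail to be filled in.
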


The strategy will be as follows. We first establish the existence of models of size $\kappa_\alpha$ (and thus, the existence of models in all smaller powers) for a coloring class given by a set $\mathcal{W}$ such that $\Rk(\emptyset;\mathcal{W})\ge \alpha$. We will then use the existence result to show that disjoint amalgamation holds.

\begin{notation}
\label{allowed_diag_trimmed}
Suppose $L$ is a relational language with the set of relation symbols $\mathcal{R}=\bigcup \mathcal{R}_n$ and $\mathcal{W}$ is a set of allowed diagrams in $L$. Let $\overline{w}\in \mathcal{W}^k$, $k\ge 1$, be a fixed element. We define a new relational language $L/\overline{w}$ and a set of allowed diagrams $\mathcal{W}/\overline{w}$ in $L/\overline{w}$ as follows. Let $(\mathcal{R}/\overline{w})_n :=\mathcal{R}_{n+k}$ for $1\le n<\omega$. If $w\supset \overline{w}$ is a function with domain $[k+n]$, let $w/\overline{w}$ denote the function $i\in [n]\mapsto w(k+i)$. Finally, let 
$$
\mathcal{W}/\overline{w} :=\{w/\overline{w}\mid w\supseteq \overline{w}, w \in \mathcal{W}\}.
$$
\end{notation}

\begin{remark}
	This operation is most easily considered by thinking of sets of allowed diagrams as trees. If $\mathcal{W}$ is a set of allowed diagrams, $1 \leq k < \omega$, and $\overline{w} \in \mathcal{W}^k$, then $\mathcal{W}/\overline{w}$ as a tree is isomorphic to the tree whose root is $\overline{w}$ and whose $n$-th level is the $(n+k)$-th level of $\mathcal{W}_{\{\overline{w}\}}$. In other words, to produce the tree associated with $\mathcal{W}/\overline{w}$ from that of $\mathcal{W}$, we first prune the tree by passing to $\mathcal{W}_{\{\overline{w}\}}$ and then chop off the stem of length $k$.
\end{remark}

\begin{proposition}
\label{quotient_rank}
Suppose $\mathcal{W}$ is a set of allowed diagrams in a relational language $L$, and suppose that $\overline{w} \in \mathcal{W}$. Then for every $w\supset \overline{w}$, if $\Rk(w;\mathcal{W})\ge \alpha$, then $\Rk(w/\overline{w}; \mathcal{W}/\overline{w})\ge \alpha$.
\end{proposition}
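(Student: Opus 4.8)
The plan is to prove this by transfinite induction on $\alpha$, working directly from the definition of the existence rank. Fix $\overline{w}\in\mathcal{W}^k$ once and for all, and for an ordinal $\alpha$ let $P(\alpha)$ be the statement: for every $u\in\mathcal{W}$ with $u\supseteq\overline{w}$, if $\Rk(u;\mathcal{W})\ge\alpha$ then $\Rk(u/\overline{w};\mathcal{W}/\overline{w})\ge\alpha$. I will show $P(\alpha)$ holds for all $\alpha$, assuming $P(\gamma)$ for all $\gamma<\alpha$. (Morally this is just the statement that the tree isomorphism between $\{u\in\mathcal{W}\mid u\supseteq\overline{w}\}$ and $\mathcal{W}/\overline{w}$ preserves rank, but a direct induction is cleanest to write.)

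The cases $\alpha=0$ and $\alpha$ a limit ordinal are immediate. For $\alpha=0$, every element has rank $\ge 0$ by clause (1) of the definition of $\Rk$. For limit $\alpha$, if $\Rk(u;\mathcal{W})\ge\alpha$ then $\Rk(u;\mathcal{W})\ge\beta$ for all $\beta<\alpha$, so by the induction hypothesis $\Rk(u/\overline{w};\mathcal{W}/\overline{w})\ge\beta$ for all $\beta<\alpha$, and clause (2) gives $\Rk(u/\overline{w};\mathcal{W}/\overline{w})\ge\alpha$.

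For the successor case $\alpha=\gamma+1$, suppose $u\supseteq\overline{w}$ has domain $[n]$ (so $n\ge k$) and $\Rk(u;\mathcal{W})\ge\gamma+1$. By clause (3) there is $u'\in\mathcal{W}^{n+1}$ with $u'\supset u$ and $\Rk(u';\mathcal{W})\ge\gamma$. Since $u'\supseteq u\supseteq\overline{w}$, the function $u'/\overline{w}$ is defined, lies in $(\mathcal{W}/\overline{w})^{(n-k)+1}$, and properly extends $u/\overline{w}\in(\mathcal{W}/\overline{w})^{n-k}$. The induction hypothesis applied to $u'$ gives $\Rk(u'/\overline{w};\mathcal{W}/\overline{w})\ge\gamma$, and then clause (3) yields $\Rk(u/\overline{w};\mathcal{W}/\overline{w})\ge\gamma+1=\alpha$, completing the induction; taking $u=w$ gives the proposition.

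There is no real obstacle here: the only points requiring care are the level bookkeeping—checking that the quotient of a level-$(n+1)$ diagram lying above $\overline{w}$ is a level-$((n-k)+1)$ diagram, hence a one-step extension in $\mathcal{W}/\overline{w}$—and the observation that any extension $u'$ of $u$ automatically satisfies $u'\supseteq\overline{w}$, so that the operation $u'\mapsto u'/\overline{w}$ and the induction hypothesis both apply. Running the same induction in reverse in fact shows $\Rk(w;\mathcal{W})=\Rk(w/\overline{w};\mathcal{W}/\overline{w})$ whenever $w\supseteq\overline{w}$, but only the stated inequality is needed in what follows.
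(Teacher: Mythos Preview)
Your proof is correct and follows essentially the same approach as the paper's: a straightforward transfinite induction on $\alpha$ handling the zero, limit, and successor cases exactly as you do. The only difference is that you spell out the level bookkeeping and the fact that $u'\supseteq\overline{w}$ more explicitly than the paper does.
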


\begin{proof}
By induction on $\alpha$. If $\alpha=0$, this is clear. If $\alpha$ is a limit ordinal and $\Rk(w;\mathcal{W})\ge \alpha$, then $\Rk(w;\mathcal{W}) \geq \beta$ for all $\beta < \alpha$. By the induction hypothesis, $\Rk(w/\overline{w}; \mathcal{W}/\overline{w}) \geq \beta$ for all $\beta < \alpha$, so $\Rk(w/\overline{w}; \mathcal{W}/\overline{w}) \geq \alpha$. If $\alpha = \beta + 1$, let $u \in \mathcal{W}$ be such that $w \subset u$ and $\Rk(u;\mathcal{W}) \geq \beta$. $w/\overline{w} \subset u/\overline{w}$ and, by the induction hypothesis, $\Rk(u/\overline{w}; \mathcal{W}/\overline{w}) \geq \beta$, so $\Rk(w/\overline{w}; \mathcal{W}/\overline{w}) \geq \beta + 1 = \alpha$.
\end{proof}

\begin{lemma}
\label{existence_high}
Let $L$ be a relational language and $\alpha$ an ordinal, and suppose that $\mathcal{W}$ is a set of allowed diagrams in $L$ such that $\Rk(\emptyset;\mathcal{W}) \geq \alpha$. Then there is $M \in K(\mathcal{W})$ such that $\|M\| \geq \kappa_\alpha$.
\end{lemma}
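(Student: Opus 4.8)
The plan is to reduce to the case that $\Rk(\emptyset;\mathcal W)$ is an ordinal and then induct on it. If $\Rk(\emptyset;\mathcal W)=\infty$, then Proposition~\ref{infinite_rank} gives an infinite monochromatic $M\in K(\mathcal W)$, and, as observed after that proposition, $K(\mathcal W)$ then has models of every cardinality, so we are done; hence we may assume $\Rk(\emptyset;\mathcal W)$ is an ordinal. It suffices to prove, by induction on the ordinal $\delta$, the statement $P(\delta)$: ``for every relational language and every set of allowed diagrams $\mathcal W$ with $\Rk(\emptyset;\mathcal W)=\delta$, there is $M\in K(\mathcal W)$ with $\|M\|\ge\kappa_\delta$'' (given $\mathcal W$ with $\alpha\le\Rk(\emptyset;\mathcal W)=\delta<\infty$, $P(\delta)$ yields $M$ with $\|M\|\ge\kappa_\delta\ge\kappa_\alpha$). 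For $\delta<\omega$ we have $\kappa_\delta=\delta$; since $\Rk(\emptyset;\mathcal W)\ge\delta$ there is $w\in\mathcal W^\delta$, and the monochromatic structure of size $\delta$ with diagram $w$ lies in $K(\mathcal W)$, which handles the base case.

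Next suppose $\delta$ is a limit ordinal and assume $P(\delta')$ for $\delta'<\delta$. Since $\delta=\Rk(\emptyset;\mathcal W)=\sup_{u\in\mathcal W^1}(\Rk(u;\mathcal W)+1)$ and $\delta$ is a limit, $\Rk(u;\mathcal W)<\delta$ for all $u\in\mathcal W^1$, and $\{\Rk(u;\mathcal W):u\in\mathcal W^1\}$ is cofinal in $\delta$. For each $u\in\mathcal W^1$, Proposition~\ref{pruned_rank}(1) gives $\Rk(u;\mathcal W_{\{u\}})=\Rk(u;\mathcal W)$, and since $(\mathcal W_{\{u\}})^1=\{u\}$ we get $\Rk(\emptyset;\mathcal W_{\{u\}})=\Rk(u;\mathcal W)+1<\delta$; by $P(\Rk(u;\mathcal W)+1)$ there is $M_u\in K(\mathcal W_{\{u\}})$ with $\|M_u\|\ge\kappa_{\Rk(u;\mathcal W)}$. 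Let $M$ be the disjoint union $\bigsqcup_{u\in\mathcal W^1}M_u$, colouring any finite subset that meets two or more summands arbitrarily. Every singleton of $M_u$ gets colour $u(1)$ (as $M_u\in K(\mathcal W_{\{u\}})$ and $(\mathcal W_{\{u\}})^1=\{u\}$), and the values $u(1)$ are pairwise distinct, so every monochromatic finite subset of $M$ lies in a single summand and hence has diagram in $\mathcal W_{\{u\}}\subseteq\mathcal W$; thus $M\in K(\mathcal W)$. Finally $\|M\|=\sum_{u\in\mathcal W^1}\|M_u\|\ge\sup_u\kappa_{\Rk(u;\mathcal W)}=\kappa_\delta$, using cofinality and the continuity of $\langle\kappa_\alpha\rangle$ at limits.

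Now let $\delta=\rho+1$; if $\rho<\omega$ this is the base case, so assume $\rho\ge\omega$, hence $\kappa_\delta=2^{\kappa_\rho}$. Pick $u\in\mathcal W^1$ with $\Rk(u;\mathcal W)=\rho$. By Proposition~\ref{quotient_rank} together with the remark after Notation~\ref{allowed_diag_trimmed} (the tree $\mathcal W/u$ is isomorphic to the part of $\mathcal W_{\{u\}}$ above $u$, and rank is a tree invariant), $\Rk(\emptyset;\mathcal W/u)=\Rk(u;\mathcal W)=\rho<\delta$. By $P(\rho)$ fix $N\in K(\mathcal W/u)$ with $\|N\|\ge\kappa_\rho$, fix a well-ordering of $|N|$, and let $c_N$ be its colouring function (so $c_N$ of an $m$-element set lies in $(\mathcal R/u)_m=\mathcal R_{m+1}$). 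I then build $M\in K(\mathcal W)$ on the set ${}^{|N|}2$ of binary sequences indexed by $|N|$, of size $2^{\|N\|}\ge2^{\kappa_\rho}=\kappa_\delta$, by a Sierpi\'nski-style colouring driven by $c_N$: put ${}^{|N|}2$ in its lexicographic order and, to a finite $A=\{f_1<_{\mathrm{lex}}\cdots<_{\mathrm{lex}}f_n\}$, attach the set $D(A)\subseteq|N|$ of merge heights $\Delta(f_i,f_{i+1})$ (the least coordinate where $f_i,f_{i+1}$ differ). Colour $A$ by $u(1)$ if $n=1$, and otherwise essentially by $c_N(D(A))$ — an $n$-ary colour, since $|D(A)|=n-1$ — after arranging that a monochromatic $A$ has distinct merge heights, so $|D(A)|=n-1$. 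The verification uses that deleting one point of $A$ yields a set whose merge-height set is $D(A)$ with one suitably chosen element removed, and that every element of $D(A)$ is removed for an appropriate choice of deleted point; hence, if $A$ is monochromatic in $M$, every $j$-element subset of $D(A)$ receives the same $c_N$-colour for each $j\le n-1$, that is, $D(A)$ is monochromatic in $N$. Then $d^N_{D(A)}\in\mathcal W/u$; writing $d^N_{D(A)}=w/u$ with $w\supseteq u$ and $|w|=n$, one checks that $w=d_A$, so $d_A\in\mathcal W$, and therefore $M\in K(\mathcal W)$.

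The main obstacle is this successor step: one must exhibit a colouring of ${}^{|N|}2$ that, on the one hand, agrees with $c_N(D(A))$ on the ``generic'' finite sets $A$ (so that monochromatic subsets of $M$ pull back to monochromatic subsets of $N$ as above), and, on the other hand, detects and spoils the ``degenerate'' configurations (repeated or badly ordered merge heights) so that they cannot sit inside a monochromatic subset — all while taking values in the available relation symbols $\mathcal R_n$ (there is room for this because, when $\omega\le\Rk(\emptyset;\mathcal W)<\infty$, the tree $\mathcal W$ cannot be finitely branching). Arranging this, and then carrying out the arity bookkeeping forced by the passage to $\mathcal W/u$, is the technical heart of the argument; by comparison, the limit and base cases are routine once the induction is framed via $P(\delta)$ and the prunings $\mathcal W_{\{u\}}$ as above.
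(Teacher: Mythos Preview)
Your base and limit cases match the paper. The successor step, however, has a genuine gap. The claim that ``every element of $D(A)$ is removed for an appropriate choice of deleted point'' is false: deleting an interior $f_j$ from $A=\{f_1<_{\mathrm{lex}}\cdots<_{\mathrm{lex}}f_n\}$ removes $\max\bigl(\Delta(f_{j-1},f_j),\Delta(f_j,f_{j+1})\bigr)$ from $D(A)$, so a strict local minimum of the sequence $\langle\Delta(f_j,f_{j+1})\rangle_j$ is never removed. (Concretely, with $n=4$ and merge heights $5,2,7$, no $3$-element $B\subseteq A$ has $D(B)=\{5,7\}$.) Hence for a monochromatic $A$ whose merge-height sequence is not monotone you cannot conclude that $D(A)$ is monochromatic in $N$, and the deduction $d_A\in\mathcal W$ breaks down.

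You anticipate this by proposing to ``spoil degenerate configurations'', but the justification offered---that $\mathcal W$ cannot be finitely branching when $\omega\le\Rk(\emptyset;\mathcal W)<\infty$---does not suffice: one needs, at a \emph{specific} level, several pairwise incompatible extensions \emph{each of full rank}, so that distinct sign patterns $s_X$ receive provably distinct colours while the inductive hypothesis still applies to each. Mere infinite branching somewhere does not give this. The paper obtains exactly this via Proposition~\ref{pruned_rank}(2): writing $\alpha=\beta+k$ with $\beta$ limit, it fixes $\overline w\in\mathcal W^k$ of rank $\beta$ and uses the limit-rank structure above $\overline w$ to split the children into disjoint families $S_i$ (one per sign pattern $s_i\in{}^{k-1}2$), each with $\Rk(\emptyset;\mathcal W_{S_i})=\alpha$; only then does the colouring $c$ force monochromatic $X$ to have monotone $\Delta(X)$, and in that monotone case the map $B\mapsto D(B)$ \emph{is} surjective onto subsets of $D(A)$ (this is the Claim in the paper's proof). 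Your single quotient $\mathcal W/u$ and the plain $\rho\to\rho+1$ induction do not supply the needed disjoint full-rank families, so the ``spoiling'' step is not just bookkeeping---it is where the real work lies.
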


\begin{proof}
We have already shown that, if $\Rk(\emptyset;\mathcal{W}) \ge |L|^+$, then $K(\mathcal{W})$ contains arbitrarily large models. Thus, it suffices to show that, if $\Rk(\emptyset;\mathcal{W}) = \alpha<|L|^+$, then $K(\mathcal{W})$ contains models of size $\kappa_\alpha$. We use induction on $\alpha$ and show that \emph{for every $L$} and \emph{for every} set of allowed diagrams, if $\Rk(\emptyset;\mathcal{W})=\alpha$, then $K(\mathcal{W})$ has a model of size $\kappa_\alpha$.

First, suppose $\alpha < \omega$. If $\Rk(\emptyset;\mathcal{W}) = \alpha$, then there is $c\in \mathcal{W}$ such that $\dom(c) = [\alpha]$. If $N$ is the monochromatic structure determined by $c$, then $N\in K(\mathcal{W})$ and $N$ has the size $\kappa_\alpha = \alpha$. Note that, in general, this is the best we can do.

Next, suppose $\alpha$ is a limit ordinal and $\Rk(\emptyset;\mathcal{W}) = \alpha$. Let $\lambda = \cf(\alpha)$. Fix $\langle w_\gamma \mid \gamma < \lambda \rangle$ and $\langle \beta_\gamma \mid \gamma < \lambda \rangle$ such that:
\begin{itemize}
\item{}
For all $\gamma < \lambda$, $w_\gamma \in \mathcal{W}^1$;
\item{}
$\langle \beta_\gamma \mid \gamma < \lambda \rangle$ is a strictly increasing sequence of ordinals, cofinal in $\alpha$.
\item{}
For all $\gamma < \lambda$, $\Rk(w_\gamma;\mathcal{W})=\beta_\gamma$ (and hence 
$\Rk(\emptyset;\mathcal{W}_{\{w_\gamma \}})= \beta_\gamma + 1$).
\end{itemize}
For each $\gamma < \lambda$, fix, by the inductive hypothesis, $M_\gamma \in K(\mathcal{W}_{\{w_\gamma\} })$, with associated coloring $c_\gamma : [|M_\gamma|]^{<\omega} \rightarrow \mathcal{R}$, with 
$\|M_\gamma \| = \kappa_{\beta_\gamma + 1}$. We may assume that the universes $|M_\gamma|$, $\gamma<\lambda$, are pairwise disjoint. We will now define a structure $M \in K(\mathcal{W})$. The universe of $M$ will be the disjoint union of the universes $|M_\gamma|$ for $\gamma < \lambda$. The coloring $c: [|M|]^{<\omega} \rightarrow \mathcal{R}$ is defined as follows. If $X \in [|M|]^{<\omega}$ and there is $\gamma < \lambda$ such that $X \subseteq |M_\gamma|$, then let $c(X) = c_\gamma(X)$. If there is no such $\gamma$, then let $c(X)$ be an arbitrary element of $\mathcal{R}_{|X|}$. Notice that, in the latter case, there are $x_0, x_1 \in X$ and $\gamma_0 < \gamma_1 < \lambda$ such that $x_0 \in M_{\gamma_0}$ and $x_1 \in M_{\gamma_1}$. In this case, $c(\{x_0\}) = c_{\gamma_0}(1)$ and $c(\{x_1\}) = c_{\gamma_1}(1)$. Since $w_{\gamma_0}$ and $w_{\gamma_1}$ are distinct elements of $\mathcal{W}^1$, we have $w_{\gamma_0}(1) \not= w_{\gamma_1}(1)$. Thus, since $c_\gamma$ is a $\mathcal{W}_{w_\gamma}$-coloring for all $\gamma < \lambda$, we have $c_{\gamma_0}(1) \not= c_{\gamma_1}(1)$, so $X$ cannot be monochromatic. It follows that, if $X \in [M]^{<\omega}$ is monochromatic, then there is $\gamma < \lambda$ such that $X \subseteq M_\gamma$. Then the fact that $M \in K(\mathcal{W})$ follows easily from the fact that each $M_\gamma$ is in $K(\mathcal{W})$.

Now suppose $\alpha = \beta + k$, where $\beta$ is a limit ordinal and $0 < k < \omega$. We first consider the case $\beta = \omega$, $k = 1$. In this case, we may fix $\overline{w} \in \mathcal{W}^1$ with $\Rk(\overline{w};\mathcal{W}) = \omega$ and, in turn, we may fix $\{\overline{w}_n \mid n < \omega \} \subseteq (\mathcal{W}_{\{\overline{w}\}})^2$ such that, for all $m < n < \omega$, $\overline{w}_m \not= \overline{w}_n$. We now construct a model $M$, with associated coloring $c$, in $K(\mathcal{W})$ (in fact in $K(\mathcal{W}_{\{\overline{w}\}})$) such that $\|M\| = \kappa_{\omega+1} = 2^\omega$. The universe of $M$ will be ${^\omega 2}$, the set of all functions $f: \omega \rightarrow 2$. If $f \in {^\omega 2}$, then let $c(\{f\}) = \overline{w}(1)$. If $f \not= g \in {^\omega 2}$, let $\Delta(f,g)$ denote the least $n < \omega$ such that $f(n) \not= g(n)$, and let $c(\{f,g\}) = \overline{w}_{\Delta(f,g)}(2)$. This coloring ensures that no triple of distinct functions $\{f,g,h\}$ is monochromatic for $c$, since it cannot be the case that $\Delta(f,g) = \Delta(f,h) = \Delta(g,h)$. Thus, for $X \in [{^\omega 2}]^{<\omega}$ with $|X| \geq 3$, we may let $c(X)$ be an arbitrary element of $\mathcal{R}_{|X|}$.

Next, suppose $\beta > \omega$ and $k = 1$. Let $\mu = \cf(\beta)$. Fix $\overline{w} \in \mathcal{W}^1$ with $\Rk(\overline{w};\mathcal{W}) = \beta$, and fix an increasing, continuous sequence of ordinals $\langle \alpha_i \mid i < \mu \rangle$ and, for each $i < \mu$, a $\overline{w}_i \in (\mathcal{W}_{\{\overline{w}\}})^2$ such that:
\begin{itemize}
	\item{$\alpha_0 = 0$, $\alpha_1 > \omega$, and $\langle \alpha_i \mid i < \mu \rangle$ is cofinal in $\beta$.}
	\item{For all $i < \mu$, $\Rk(\overline{w}_i;\mathcal{W}) = \alpha_{i+1}$.}
\end{itemize}
For all $i < \mu$, let $\alpha_{i+1} = \beta_{i+1} + k_{i+1}$, where $\beta_{i+1}$ is a limit ordinal and $k_{i+1} < \omega$. By Proposition \ref{pruned_rank}, we may fix $\overline{w}^*_i \in (\mathcal{W}_{\{\overline{w}_i\}})^{k_{i+1}+2}$ with $\Rk(\overline{w}^*_i;\mathcal{W}) = \beta_{i+1}$ and find disjoint sets $\{S^i_j \subset (\mathcal{W}_{\{\overline{w}^*_i\}})^{k_{i+1}+3} \mid j < 2^{k_{i+1}+1}\}$ such that, for all $j < 2^{k_{i+1}+1}$, $\Rk(\emptyset; \mathcal{W}_{S^i_j}) = \alpha_{i+1} + 2$.

For all $i < \mu$ and $j < 2^{k_{i+1}+1}$, let $\mathcal{U}^i_j = (\mathcal{W}_{S^i_j})/\overline{w}$. Note that $\Rk(\emptyset; \mathcal{U}^i_j) = \alpha_{i+1} + 1$. By the inductive hypothesis, fix, for each $i < \mu$ and $j < 2^{k_{i+1}+1}$, a model $M^i_j \in K(\mathcal{U}^i_j)$, with associated coloring $c^i_j: [|M^i_j|]^{<\omega} \rightarrow \mathcal{R}/\overline{w}$, such that $\|M^i_j\| = \kappa_{\alpha_{i+1}}$ (we could do better, but this is sufficient). We may in fact assume that the universe of $M^i_j$ is the interval $[\alpha_i, \alpha_{i+1})$. We will construct a model $M$, with associated coloring $c$, in $K(\mathcal{W}_{\{\overline{w}\}})$, such that $\|M\| = \kappa_{\alpha}$. The universe of $M$ will be ${^\kappa 2}$, where $\kappa := \kappa_\beta$.

If $f \in {^\kappa 2}$, let $c(\{f\}) = \overline{w}(1)$. If $f \not= g \in {^\kappa 2}$, then, as before, let $\Delta(f,g)$ be the least ordinal $\eta < \kappa$ such that $f(\eta) \not= g(\eta)$. For such $f$ and $g$, let $i < \mu$ be such that $\Delta(f,g) \in [\alpha_i, \alpha_{i+1})$, and let $c(\{f,g\}) = \overline{w}_i(2)$. For $i < \mu$, let $\mathcal{A}_i$ be the set of $X \in [^\kappa 2]^{<\omega}$ such that, for all $f \not= g \in X$, $\Delta(f,g) \in [\alpha_i, \alpha_{i+1})$. If $X \in [^\kappa 2]^{<\omega}$, $|X| \geq 3$, and there is no $i < \mu$ such that $X \in \mathcal{A}_i$, then $X$ cannot be monochromatic under $c$, and we can define $c(X)$ to be an arbitrary element of $\mathcal{R}_{|X|}$. If $X \in \mathcal{A}_i$ and $|X| \leq k_{i+1} + 2$, let $c(X) = \overline{w}^*_i(|X|)$. It remains to define $c$ on sets $X$ with $X \in \mathcal{A}_i$ and $|X| > k_{i+1} + 2$.

Let $\prec$ denote the lexicographic ordering of $^\kappa 2$. We will think of elements of $[^\kappa 2]^{<\omega}$ as being finite sets linearly ordered by $\prec$, i.e. sets $\{ f_i \mid i<n \}$ such that, for every $i<n-1$, $f_i \prec f_{i+1}$. If $X=\{ f_i \mid i<n \} \in [^\kappa 2]^{<\omega}$, let $\Delta(X) = \langle \Delta(f_i, f_{i+1}) \mid i<n-1 \rangle$. Note that, if $f_0 \prec f_1 \prec f_2$, then $\Delta(f_0, f_1) \not= \Delta(f_1, f_2)$, since if both quantities were equal to some ordinal $\eta$, then this would imply $f_0(\eta) < f_1(\eta) < f_2(\eta)$, which is impossible.

For all $i < \mu$, enumerate ${^{k_{i+1}+1}2}$ as $\{s^i_j \mid j < 2^{k_{i+1}+1}\}$, where $s^i_0$ is the constant function taking value $0$ and $s^i_1$ is the constant function taking value $1$. If $X \in \mathcal{A}_i$ and $|X| = k_{i+1} + 3$, then define $s_X \in {^{k_{i+1}+1}2}$ by letting, for all $n < k_{i+1}+1$,
\[
s_X(n)= 
\begin{cases}
	0, & \text{if } \Delta(f_n, f_{n+1}) < \Delta(f_{n+1}, f_{n+2});\\
	1, & \text{if } \Delta(f_n, f_{n+1}) > \Delta(f_{n+1}, f_{n+2}).
\end{cases}
\]
In particular, $s_X = s^i_0$ if and only if $\Delta(X)$ is strictly increasing, and $s_X = s_1$ if and only if $\Delta(X)$ is strictly decreasing.

Now, to complete the coloring, if $i < \mu$, $X \in \mathcal{A}_i$, and $|X| = k_{i+1}+3$, find $j$ such that $s_X = s^i_j$. If $j \in \{0,1\}$, then let $c(X) = c^i_j(\Delta(X))$. Note that here we are considering $\Delta(X)$ as a set rather than a sequence, and the fact that $X \in \mathcal{A}_i$ and $\Delta(X)$ is either strictly increasing or strictly decreasing implies that $\Delta(X) \in [[\alpha_i, \alpha_{i+1})]^{k_{i+1}+2}$. If $j > 1$, then choose an arbitrary $Y \in [[\alpha_i, \alpha_{i+1})]^{k_{i+1}+2}$ and let $c(X) = c^i_j(Y)$. Notice that, if $X_0, X_1 \in \mathcal{A}_i$, $|X_0| = |X_1| = k_{i+1}+3$, and $s_{X_0} \not= s_{X_1}$, then $c(X_0) \not= c(X_1)$. If $X \in \mathcal{A}_i$ and $|X| > k_{i+1}+3$, consider $\Delta(X)$. If $\Delta(X)$ is strictly increasing, let $c(X) = c^i_0(\Delta(X))$. If $\Delta(X)$ is strictly decreasing, let $c(X) = c^i_1(\Delta(X))$. Otherwise, let $c(X)$ be an arbitrary member of $\mathcal{R}_{|X|}$.

We must now verify that ${^\kappa 2}$, equipped with this coloring, $c$, is in $K(\mathcal{W})$, i.e. that the diagrams of all monochromatic finite subsets of $^\kappa 2$ are in $\mathcal{W}$. Let $X \in [^\kappa 2]^{<\omega}$. As mentioned above, if there is no $i < \mu$ such that $X \in \mathcal{A}_i$, then $X$ cannot be monochromatic. If $X \in \mathcal{A}_i$ and $|X| \leq k_{i+1} + 2$, then $X$ is monochromatic and its diagram is equal to $\overline{w}^*_i \restriction [|X|]$, which is in $\mathcal{W}$. If $X \in \mathcal{A}_i$, $|X| = k_{i+1} + 3$, and $s_X = s^i_j$, then the diagram of $X$ is in $S^i_j$ and is thus again in $\mathcal{W}$.

It remains to consider the case $X \in \mathcal{A}_i$, $|X| > k_{i+1} + 3$. First, suppose that $X = \{f_n \mid n < n^*\}$ and $\Delta(X)$ is neither strictly increasing nor strictly decreasing. Without loss of generality, there is $m^* < n^* - 3$ such that $\Delta(f_{m^*}, f_{m^*+1}) < \Delta(f_{m^*+1}, f_{m^*+2})$ and $\Delta(f_{m^*+1}, f_{m^*+2}) > \Delta(f_{m^*+2}, f_{m^*+3})$ (the reverse case is handled in the same way). Let $\ell^* = \min(\{m^*, n^* - (k_{i+1}+4)\})$. Let $X_0 = \{f_{\ell^* + k} \mid k < k_{i+1} + 3\}$ and $X_1 = \{f_{\ell^* + k + 1} \mid k < k_{i+1} + 3\}$. Re-enumerate $X_0$ and $X_1$ in lexicographically increasing fashion as $X_0 = \{g_k \mid k < k_{i+1}+3\}$ and $X_1 = \{h_k \mid k < k_{i+1}+3\}$. Notice that, for all $k < k_{i+1}+2$, $h_k = g_{k+1}$ and that, for some $k^* < k_{i+1}+1$, $g_{k^*} = f_{m^*}$. Thus, by our assumptions about $\Delta(X)$, $s_{X_0}(k^*) = 0$. However, $h_{k^*} = f_{m^* + 1}$, so $s_{X_1}(k^*) = 1$. Thus, $X_0, X_1 \in [X]^{k_{i+1}+3}$ and $c(X_0) \not= c(X_1)$, so $X$ is not monochromatic.

Next, suppose that $X \in \mathcal{A}_i$, $X = \{f_n \mid n < n^*\}$, $|X| > k_{i+1} + 3$, and $\Delta(X)$ is strictly increasing. We need the following claim.
\begin{claim}
	If $X$ is monochromatic with respect to $c$, then $\Delta(X)$ is monochromatic with respect to $c^i_0$.
\end{claim}

\begin{proof}
	Suppose $X$ is monochromatic with respect to $c$. It suffices to show that, if $\ell < n^* - 1$ and $D \in [\Delta(X)]^\ell$, then there is $Y \in [X]^{\ell + 1}$ such that $\Delta(Y) = D$. To this end, fix such an $\ell$ and $D$. Let $D = \{\Delta(f_{n_m}, f_{n_m + 1}) \mid m < \ell \}$, where $\{f_{n_m} \mid m < \ell \}$ is enumerated in $\prec$-increasing fashion.

	First note that, under our assumption that $\Delta(X)$ is strictly increasing, if $n < n' < n'' < n^*$, then $\Delta(f_n, f_{n'}) = \Delta(f_n, f_{n''})$. Thus, for all $m < \ell - 1$, $\Delta(f_{n_m}, f_{n_m + 1}) = \Delta(f_{n_m}, f_{n_{m+1}})$, so, if $Y = \{f_{n_m} \mid m < \ell \} \cup \{f_{n^* - 1}\}$, then $Y \in [X]^{\ell + 1}$ and $\Delta(Y) = D$.
\end{proof}

Suppose $X$ is monochromatic with respect to $c$. Then $\Delta(X)$ is monochromatic with respect to $c^i_0$ and thus has diagram $u \in \mathcal{U}^i_0$. But then, by our construction, $X$ has the diagram given by the function $w$ such that $w(n-1) = u(n)$ for $2 \leq n \leq |X|$ and $w(1) = \overline{w}(1)$. But this $w$ is in $\mathcal{W}_{S^i_0}$ and hence in $\mathcal{W}$.

The case in which $X \in \mathcal{A}_i$, $|X| > k_{i+1} + 3$, and $\Delta(X)$ is strictly decreasing is handled in the same way, \emph{mutatis mutandis}.

We finally address the case in which $\beta$ is a limit ordinal and $k > 1$. By Proposition~\ref{pruned_rank}, we may fix $\overline{w} \in \mathcal{W}^k$ with $\Rk(\overline{w};\mathcal{W}) = \beta$ and find disjoint sets $\{S_i\subset (\mathcal{W}_{\{\overline{w}\}})^{k+1} \mid i < 2^{k-1} \}$ such that, for all $i < 2^{k-1}$, $\Rk(\emptyset;\mathcal{W}_{S_i}) = \beta + k$.


For all $i < 2^{k-1}$, let $\mathcal{U}_i = (\mathcal{W}_{S_i})/(\overline{w}\restriction [1])$. 
Note that $\Rk(\emptyset; \mathcal{U}_i) = \beta + k - 1 = \alpha - 1$. By the inductive hypothesis, fix, for each $i < 2^{k-1}$, a model $M_i \in K(\mathcal{U}_i)$, with associated coloring $c_i:[|M_i|]^{<\omega} \rightarrow \mathcal{R}/(\overline{w}\restriction [1])$, such that $\|M_i\| = \kappa_{\alpha - 1} =: \kappa$. We may in fact assume that the universe of each $M_i$ is $\kappa$ itself. We will construct a model $M$, with associated coloring $c$, in $K(\mathcal{W}_{\{\overline{w}\}})$. The universe of $M$ will be ${^\kappa 2}$.

If $X \in [^\kappa 2]^{\leq k}$, then let $c(X) = \overline{w}(|X|)$. Enumerate ${^{k-1}2}$ as $\{s_j \mid j < 2^{k-1}\}$, where $s_0$ is the constant function taking value $0$ and $s_1$ is the constant function taking value $1$. As before, if $X\in [^\kappa 2]^{k + 1}$, $X = \{f_i \mid i < k + 1 \}$, then define $s_X \in {^{k-1}2}$ by letting, for all $i < k-1$, 
	\[
	s_X(i) = 
	\begin{cases}
		0, & \text{if } \Delta(f_i, f_{i+1}) < \Delta(f_{i+1}, f_{i+2});\\
		1, & \text{if } \Delta(f_i, f_{i+1}) > \Delta(f_{i+1}, f_{i+2}).
	\end{cases}
	\]

If $X\in [^\kappa 2]^{k+1}$, find $i$ such that $s_X = s_i$. If $i = 0$ or $i = 1$, then let $c(X) = c_i(\Delta(X))$. If $i > 1$, choose an arbitrary $Y \in [\kappa]^k$ and let $c(X) = c_i(Y)$. If $X\in [^\kappa 2]^{<\omega}$ and $|X| > k + 1$, consider $\Delta(X)$. If $\Delta(X)$ is strictly increasing, let $c(X) = c_0(\Delta(X))$. If $\Delta(X)$ is strictly decreasing, let $c(X) = c_1(\Delta(X))$. Otherwise, let $c(X)$ be an arbitrary element of $\mathcal{R}_{|X|}$. The verification that $^\kappa 2$, equipped with this coloring $c$, is in $K(\mathcal{W})$, proceeds along the same lines as the case $\beta > \omega$, $k = 1$.
\end{proof}

\begin{proof}[Proof of Theorem~\ref{Hanf_downside}]
Existence of the model is given by Lemma~\ref{existence_high}, so we suppose that $\Rk(w;\mathcal{W})\ge \beta+1$ for all $w\in \mathcal{W}^1$ and show disjoint amalgamation for $K(\mathcal{W})$ for models of size $\lambda\le \kappa_\beta$.

Suppose that $\{c_1,c_2\}$ is a special $(\lambda, 2)$-system, where $c_i$ is a coloring of $X\cup \{a_i\}$ for $i=1,2$ and $|X|=\lambda$. If $c_1(a_1)\ne c_2(a_2)$, then the function $c_1\cup c_2$ can be extended to a $\mathcal{W}$-coloring of $X\cup \{a_1,a_2\}$ by assigning arbitrary colors to the finite sets of the form $Y\cup\{a_1,a_2\}$ for $Y\subset X$.

Thus, suppose that $c_1(a_1)=c_2(a_2)$. Take $\overline{w}\in \mathcal{W}^2$ such that $\overline{w}(1)$ is equal to the common value of $c_i(a_1)$ and $\Rk(\overline{w};\mathcal{W})\ge \beta$ (the latter is possible by the assumption on
the rank of colorings in $\mathcal{W}^1$).

Let $\mathcal{U}:=\mathcal{W}/\overline{w}$, and let $c^*$ be a $\mathcal{U}$-coloring of $X$. Such a coloring exists because $|X| \leq \kappa_\beta$ and $\Rk(\emptyset;\mathcal{U})\ge \beta$ by Proposition~\ref{quotient_rank}.

To define the coloring $c\supset c_1\cup c_2$, we only need to define $c(\{a_1,a_2\} \cup Y)$ for all $Y \in [X]^{<\omega}$. To this end, let $c(\{a_1, a_2\}) = \overline{w}(2)$, and let $c(Y\cup \{a_1,a_2\}):=c^*(Y)$ for every non-empty finite subset $Y\subset X$. It is easily verified that $c$ is a $\mathcal{W}$-coloring.
\end{proof}

\section{A family $\mathcal{W}_\alpha$ of rank $\alpha<|L|^+$}

We conclude by showing that, for every infinite cardinal $\kappa$ and every $\alpha < \kappa^+$, there is a language $L$ of size $\kappa$ and a set of allowed diagrams $\mathcal{W}$ in $L$ such that $\Rk(w; \mathcal{W}) = \alpha$ for all $w \in \mathcal{W}^1$. (In fact, a single language $L$ will work for all such $\alpha$, namely a relational language with $\kappa$ distinct $n$-ary relations for all $0 < n < \omega$.) This shows that the bound in Proposition~\ref{infinite_rank}(2) is the best possible and that, for every $\lambda<\beth_{\kappa^+}$, there is a coloring class that has the disjoint amalgamation for models of size up to $\lambda$, but fails to have disjoint amalgamation for arbitrarily large models. In particular, for every limit ordinal $\beta$ with $\omega^2 \leq \beta < \kappa^+$ and every $k < \omega$, there is a set of allowed diagrams $\mathcal{W}$ for which $\Rk(w; \mathcal{W}) = \beta + k + 1$ for all $w \in \mathcal{W}^1$. By the results of the previous sections, the coloring class $K(\mathcal{W})$ has disjoint amalgamation for models of size $\leq \beth_{\beta + k}$ but fails to have amalgamation for models of some size $\leq \beth_{\beta + {k + 3 \choose 2}}$.

The family of examples is the same as described in~\cite{BKS}.

\begin{notation} Fix an infinite cardinal $\kappa$ and an ordinal $\alpha$ with $\kappa \leq \alpha < \kappa^+$.  Let $L_{\alpha}$ contain unary predicates $P_{1; \gamma,\alpha}$ for $\gamma\leq \kappa$ and $n$-ary relation symbols $P_{n;\gamma,\beta}$ for $2\leq n <\omega$, $\gamma<\kappa$,  and $\beta\leq \alpha$. 

Let $\mathcal{W}(\alpha)$ be the set of all functions $w:[n]\to L_\alpha$ such that for all $1\le i<j\le n$ if $w(i)=P_{i,\zeta_i,\alpha_i}$ and  $w(j)=P_{j,\zeta_j,\alpha_j}$, then $\alpha_i>\alpha_j$.
\end{notation}

\begin{claim}
For all $n<\omega$ and all $w:[n]\to L_\alpha$ such that $w\in \mathcal{W}(\alpha)$ we have $w(n)=P_{n,\gamma,\beta}$ if and only if $\Rk(w;\mathcal{W})=\beta$.
\end{claim}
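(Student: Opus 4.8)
The plan is to prove the implication ``$w(n)$ has third index $\beta \Rightarrow \Rk(w;\mathcal{W}(\alpha)) = \beta$'' by transfinite induction on $\beta$, and then read off the converse from the fact that the rank is a well-defined function of $w$. Throughout, $n \ge 1$ (for $n = 0$ the statement is vacuous, and it is immediate that $\mathcal{W}(\alpha)$ is a set of allowed diagrams); abbreviate $\mathcal{W} = \mathcal{W}(\alpha)$ and write $\beta(w)$ for the third index of the symbol $w(n)$. The one structural fact that does all the work is the following description of proper extensions: for $w \in \mathcal{W}^n$, a function $w' : [n+1] \to L_\alpha$ extending $w$ belongs to $\mathcal{W}$ if and only if $w'(n+1) = P_{n+1;\gamma',\beta'}$ for some $\gamma' < \kappa$ and some ordinal $\beta' < \beta(w)$. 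The forward direction is the $(i,j) = (n,n+1)$ instance of the monotonicity condition defining $\mathcal{W}$ (the remaining instances hold automatically, since the third indices along $w$ are already strictly decreasing); the backward direction is clear, because $n+1 \ge 2$ guarantees that symbols $P_{n+1;\gamma',\beta'}$ exist and $\beta' < \beta(w) \le \alpha$.

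Now fix $\beta$ and assume the implication for all ordinals $< \beta$ (for all $n \ge 1$ and all relevant diagrams). Let $w \in \mathcal{W}^n$ with $\beta(w) = \beta$. For the upper bound $\Rk(w;\mathcal{W}) \not\ge \beta + 1$: by the structural fact, every proper extension $w' \in \mathcal{W}^{n+1}$ of $w$ has $\beta(w') < \beta$, hence $\Rk(w';\mathcal{W}) = \beta(w') < \beta$ by the induction hypothesis, so no proper extension of $w$ has rank $\ge \beta$, and therefore $\Rk(w;\mathcal{W}) \not\ge \beta + 1$. When $\beta = 0$ there are no proper extensions at all, which is exactly the base case $\Rk(w;\mathcal{W}) = 0$. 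For the lower bound $\Rk(w;\mathcal{W}) \ge \beta$, I would show $\Rk(w;\mathcal{W}) \ge \delta$ for every $\delta \le \beta$ by a secondary induction on $\delta$: the case $\delta = 0$ is part of the definition; if $\delta = \epsilon + 1 \le \beta$, then $\epsilon < \beta$, so choosing $w' \in \mathcal{W}^{n+1}$ with $w'(n+1) = P_{n+1;0,\epsilon}$ gives, by the main induction hypothesis, $\Rk(w';\mathcal{W}) = \epsilon$, whence $\Rk(w;\mathcal{W}) \ge \epsilon + 1 = \delta$; and if $\delta \le \beta$ is a limit, $\Rk(w;\mathcal{W}) \ge \delta$ follows from the smaller cases. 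Combining the two bounds yields $\Rk(w;\mathcal{W}) = \beta$, closing the induction.

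For the converse, suppose $\Rk(w;\mathcal{W}) = \beta$. The symbol $w(n)$ equals $P_{n;\gamma,\beta(w)}$ for some admissible $\gamma$ and some $\beta(w) \le \alpha$; by the implication just proved, $\Rk(w;\mathcal{W}) = \beta(w)$, and since the rank takes a unique value, $\beta(w) = \beta$, i.e. $w(n) = P_{n;\gamma,\beta}$. I do not anticipate a genuine obstacle: once the description of proper extensions in $\mathcal{W}$ is recorded, the argument is the standard computation of the rank of a well-founded tree, and the only points needing a little care are keeping the two nested inductions apart (primary on $\beta$, secondary on $\delta$ for the lower bound) and observing that every proper extension $w'$ of $w$ has strictly smaller third index $\beta(w') < \beta$, so that the primary induction hypothesis is available for $w'$ even though $w'$ is a longer diagram.
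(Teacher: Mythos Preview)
Your proposal is correct and follows essentially the same approach as the paper: transfinite induction on $\beta$, driven by the observation that the proper extensions of $w\in\mathcal{W}^n$ in $\mathcal{W}^{n+1}$ are exactly those whose $(n+1)$-st coordinate has a strictly smaller third index. The only cosmetic difference is organizational: you prove the forward implication in full (with an explicit secondary induction for the lower bound) and then read off the converse from uniqueness of the rank, whereas the paper handles both directions of the biconditional together at each stage of the induction.
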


\begin{proof}
Easy induction on $\beta$. If $w(n)=P_{n,\gamma,0}$, then there cannot be a function in $\mathcal{W}$ that properly extends $w$, thus $\Rk(w;\mathcal{W})=0$. Conversely, if  $\Rk(w;\mathcal{W})=0$ and $w(n)=P_{n,\gamma,\delta}$ for $\delta>0$, then $w$ can be extended to a function $\overline{w}\in \mathcal{W}(\alpha)$ by letting, for example, $\overline{w}(n+1):=P_{n+1,0,0}$, so the rank of $w$ cannot be 0.

If $w(n)=P_{n,\gamma,\beta+1}$, then every extension $\overline{w}\in \mathcal{W}(\alpha)$ of $w$ satisfies  $\overline{w}(n+1):=P_{n+1,\gamma',\delta}$ for some $\gamma'\le \kappa$ and some $\delta\le \beta$. Thus, the induction hypothesis and the definition of the rank $\Rk$ give that $\Rk(w;\mathcal{W}(\alpha))=\beta+1$. For the converse, if $\Rk(w;\mathcal{W})=\beta+1$ and $w(n)=P_{n,\gamma,\delta}$, then $\delta$ cannot be less than or equal to $\beta$ by the induction hypothesis. If $\delta\ge \beta+2$, then we can define $\overline{w}(n+1):=P_{n+1,0,\beta+1}$, $\overline{w}\in \mathcal{W}(\alpha)$. Then the implication proved above gives $\Rk(\overline{w};\mathcal{W}(\alpha))=\beta+1$, so $\Rk(w;\mathcal{W}(\alpha))\ge \beta+2$, a contradiction.

The case of a limit ordinal $\beta$ is proved by a similar argument.
\end{proof}

\bibliography{coloringref}

\bibliographystyle{plain}

\end{document}